\newtheorem{definition}{Definition}[section]
\newtheorem{theorem}[definition]{Theorem}
\newtheorem{proposition}[definition]{Proposition}
\theoremstyle{definition}
\newtheorem{remark}[definition]{Remark}
\newtheorem{example}[definition]{Example}
\newcommand{\la}{\left\langle}
\newcommand{\ra}{\right\rangle}
\newcommand{\ucpa}{\text{UCP} \big(\mathcal{A}, \mathcal{B}  (\mathcal{H}) \big)}
\newcommand{\ucpapg}{\text{UCP}^{G_\tau} \big(\mathcal{A}, \mathcal{B}  (\mathcal{H} ) \big)}
\newcommand{\ucpapgc}{\text{UCP}^{G_\tau} \big(\mathcal{A}, \mathbb{C} \big)}
\newcommand{\cpa}{\text{CP} \big(\mathcal{A}, \mathcal{B}  (\mathcal{H} ) \big)}
\newcommand{\cpapg}{\text{CP}^{G_\tau} \big(\mathcal{A}, \mathcal{B} (\mathcal{H}) \big)}
\newcommand{\pacom}{\pi \big ( \mathcal{A} \big)^\prime}
\title[$C^\ast$-extreme points of Unital Completely Positive  maps invariant under group action]{$C^\ast$-extreme points of Unital Completely Positive  maps invariant under group action}
\author[Kulkarni]{Chaitanya J. Kulkarni}
\address{Chaitanya J. Kulkarni, Theoretical Statistics and Mathematics Unit, Indian Statistical Institute, Delhi Centre, 7 S. J.
S. Sansanwal Marg, New Delhi 110016, India}
\email{chaitanyakulkarni58@gmail.com}
\subjclass[]{46L05, 47L07, 46A55, 46B22, 46L55}
\keywords{unital completely positive maps, $C^\ast$-convexity, $C^\ast$-extreme points, Krein--Milman theorem, group action}
\begin{document}

\maketitle

\begin{abstract}
In this work, we study a sub-collection of unital completely positive maps from a unital $C^\ast$-algebra $\mathcal{A}$ to $\mathcal{B}(\mathcal{H})$, the algebra of bounded linear operators on a Hilbert space $\mathcal{H}$ in the setting of $C^\ast$-convexity. Let $\tau$ be an action of a group $G$ on the $C^\ast$-algebra $\mathcal{A}$ through $C^\ast$-automorphisms. We focus our attention to the set of all unital completely positive maps from  $\mathcal{A}$ to $\mathcal{B}(\mathcal{H})$, which remain invariant under $\tau$. We denote this collection by the notation $\ucpapg$. This collection forms a $C^\ast$-convex set. We characterize the set of $C^\ast$-extreme points of $\ucpapg$. Further, we conclude the article by proving the Krein--Milman type theorem in the setting of $C^\ast$-convexity for the set $\ucpapg$. 
\end{abstract}


\section{Introduction} \label{sec;Introduction}
Let $\tau$ be an action of a group $G$ on a unital $C^\ast$-algebra $\mathcal{A}$ through (unital) $C^\ast$-automorphisms. Consider the following two sets:
\begin{align*}
\text{UCP} \big(\mathcal{A}, \mathbb{C} \big) &:= \left \{ \omega : \mathcal{A} \rightarrow \mathbb{C} \; \; : \; \; \omega \; \; \text{is a state} \right \}; \\
\text{UCP}^{G_\tau} \big(\mathcal{A}, \mathbb{C} \big) &:= \left \{ \omega \in  \text{UCP} \big(\mathcal{A}, \mathbb{C} \big)  \; \; : \; \; \omega(\tau_g(a)) = \omega (a) \; \; \text{for all} \; \; g \in G \; \; \text{and} \; \; a \in \mathcal{A} \right \}.
\end{align*}
The set $\text{UCP}^{G_\tau} \big(\mathcal{A}, \mathbb{C} \big )$ is a (linear) convex and compact (in weak*-topology) subset of the dual space $\mathcal{A}^\ast$.  The extreme points of $\text{UCP}^{G_\tau} \big(\mathcal{A}, \mathbb{C} \big )$ are called ergodic states. The decomposition of elements of $\text{UCP}^{G_\tau} \big(\mathcal{A}, \mathbb{C} \big)$ with respect to ergodic states is studied in \cite[Chapter 4.3]{OB1}. Further, this study has been continued with a specialization for group $G$ and its action $\tau$ \big (refer \cite[Chapter 4.3]{OB1} \big ). Next, in article \cite{BK3}, authors extended this study in the non-commutative setting. For a fixed Hilbert space $\mathcal{H}$, by following the above notations, the authors considered the following two sets.
\begin{align*}
\ucpa &:= \left \{ \phi : \mathcal{A} \rightarrow \mathcal{B}(\mathcal{H}) \; \; : \; \; \phi \; \; \text{is unital completely positive map} \right \}; \\
\ucpapg &:= \left \{ \phi \in \ucpa \; \; : \; \;  \phi(\tau_g(a)) = \phi (a) \; \; \text{for all} \; \; g \in G \; \; \text{and} \; \; a \in \mathcal{A} \right \},
\end{align*}
where $\mathcal{B}(\mathcal{H})$ denotes the algebra of all bounded linear operators on the Hilbert space $\mathcal{H}$. The set $\ucpapg$ is (linear) convex and compact (in BW-topology) subset of $\ucpa$. The characterization of the extreme points of $\ucpapg$, and correspondingly, the decomposition of elements of $\ucpapg$ with respect to its extreme points have been analyzed in \cite{BK3}. Further, this study has been done for a partial group action $\tau$ in \cite{HK}. In this article, we examine the set $\ucpapg$ using the notion of $C^\ast$-convexity which is the quantum analogue of linear convexity. 

The notion of linear convexity has been generalized to various other non-commutative versions. A few examples of such versions are as follows: The notion of $C^\ast$-convexity was introduced in \cite{LP}. Further, matrix convexity was examined in \cite{EW, O}.  In \cite{DK}, the authors introduced the concept of nc-convexity. The notion of CP-convexity was introduced in 1993 in \cite{F}. More recently, in \cite{AS}, the authors introduced the concept of $P$-$C^\ast$-convexity, which is defined corresponding to a positive operator $P$ on a Hilbert space. In \cite{Arveson1}, W. Arveson characterized the extreme points of various subsets of completely positive maps in the framework of classical (linear) convexity. In the context of $C^\ast$-convexity, the characterization of $C^\ast$-extreme points, and in particular the Krein–Milman type theorem for the various $C^\ast$-convex subsets of $\ucpa$ have been studied in  \cite{BBK, BhKu, FM, FZ, G, Z}. In \cite{FM93, W}, authors have studied the notion of $C^\ast$-convexity in some $C^\ast$-convex sets. The $C^\ast$-convex structure of the space of entanglement breaking maps on matrix algebras and its $C^\ast$-extreme points are studied in \cite{BDMS}, and in \cite{BH} this study has been done on the  space of entanglement breaking maps on operator systems. 

Throughout this article, unless stated otherwise, we adopt the following setup. Let $G$ be a group, $\mathcal{A}$ be a unital $C^\ast$-algebra, and $\mathcal{H}$ be a Hilbert space. Suppose $\tau$ is an action of $G$ on $\mathcal{A}$ through (unital) $C^\ast$-automorphisms. Let $\{ \phi_i \}^n_{i =1}$ be a collection of elements in $\ucpapg$. Then a $C^\ast$-convex combination of $\{ \phi_i \}^n_{i =1}$ is given by 
\begin{equation*}
\phi(\cdot) := \sum^n_{i =1} T^\ast_i \phi_i(\cdot) T_i, \; \; \; \text{where each} \; \; T_i \in \mathcal{B}(\mathcal{H}) \; \; \; \text{and} \; \; \sum\limits^n_{i =1} T^\ast_i T_i = \mathrm{Id}_\mathcal{H}.
\end{equation*}
Then $\phi \in \ucpapg$. This shows that $\ucpapg$ is a $C^\ast$-convex subset of the set $\ucpa$. Moreover, $\ucpapg$ is a compact subset of $\ucpa$ with respect to BW-topology. This naturally leads to the study of $C^\ast$-extreme points of $\ucpapg$ (see Section \ref{sec;C star extreme points Sufficient Conditions}). In this article, we first provide some sufficient conditions for an element of $\ucpapg$ to be a $C^\ast$-extreme point. We then characterize the set of $C^\ast$-extreme points of $\ucpapg$. We refer to \cite{BK3}, for the notion of the Radon-Nikodym type theorem in $\ucpapg$. We also use the techniques developed in \cite{BDMS, FM, FZ}. This article also discusses a Krein–Milman type theorem for the set $\ucpapg$ in the framework of $C^\ast$-convexity. The result is obtained by adapting the Krein-–Milman type theorems established in \cite{BBK, BhKu, FM}.

We organize this article as follows. In Section \ref{sec;Preliminaries}, we briefly recall the results and notions required for the remainder of the article. In Section \ref{sec;C star extreme points Sufficient Conditions}, we introduce the concept of $C^\ast$-extreme points in the set $\ucpapg$. We then discuss the relationship between $C^\ast$-extreme points and linear extreme points of $\ucpapg$. Further, we present some sufficient conditions that ensure a given element of $\ucpapg$ is a $C^\ast$-extreme point. In Section~\ref{sec; characterization}, we characterize the set of all $C^\ast$-extreme points of $\ucpapg$. Finally, in Section~\ref{sec; KM Theorem}, we prove a Krein–-Milman type theorem for the set $\ucpapg$ in the setting of $C^\ast$-convexity.


\section{Preliminaries} \label{sec;Preliminaries}

Here we recall some important results that will be used in the remainder of the article. We refer mainly to \cite{Arveson1, BK3, Paulsen} for the results mentioned in this section. 


\subsection{Stinespring's dilation of a completely positive map}
First, we recall the Stinespring’s dilation and then the Radon--Nikodym derivative for completely positive maps.  

\begin{theorem}[{\cite[Theorem 4.1]{Paulsen}}]\label{thm;sd}
Let $\mathcal{A}$ be a unital $C^\ast$-algebra and $\mathcal{H}$ be a Hilbert space. Let $\phi : \mathcal{A} \rightarrow \mathcal{B}(\mathcal{H})$ be a completely positive map. Then there exists a Hilbert space $\mathcal{K}$, a bounded linear map $V : \mathcal{H} \rightarrow \mathcal{K}$, and a unital \(^*\)-homomorphism $\pi : \mathcal{A} \rightarrow \mathcal{B}(\mathcal{K})$ such that
\begin{equation*}
\phi(a) = V^*\pi(a)V, \; \; \; \;  \text{for all $a \in \mathcal{A}$}. 
\end{equation*} 
Moreover, the set $\big \{ \pi(a)Vh : a \in \mathcal{A}, \; h \in \mathcal{H} \big \}$ spans a dense subspace of $\mathcal{K}$, and if $\phi$ is unital, then $V : \mathcal{H} \rightarrow \mathcal{K}$ is an isometry.
\end{theorem}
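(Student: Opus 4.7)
The plan is to construct $(\mathcal K,\pi,V)$ directly from $\phi$ via a GNS-style construction on the algebraic tensor product $\mathcal A\otimes \mathcal H$. First I would introduce the sesquilinear form
\begin{equation*}
\Big\langle \sum_i a_i\otimes h_i,\ \sum_j b_j\otimes k_j\Big\rangle_\phi := \sum_{i,j}\langle \phi(b_j^*a_i)h_i,k_j\rangle,
\end{equation*}
and verify that it is positive semi-definite. This is exactly the place where complete positivity of $\phi$ is essential: positivity of $\sum_{i,j}\langle \phi(b_j^*b_i)h_i,h_j\rangle$ is equivalent to positivity of the matrix $[\phi(b_j^*b_i)]_{i,j}\in M_n(\mathcal B(\mathcal H))$, which in turn follows from $\phi_n$ being positive on the positive element $[b_j^*b_i]_{i,j}=B^*B$ where $B$ has columns $b_i$.

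Next I would set $N=\{\xi\in \mathcal A\otimes \mathcal H:\langle\xi,\xi\rangle_\phi=0\}$, take the quotient $(\mathcal A\otimes\mathcal H)/N$, and complete to obtain the Hilbert space $\mathcal K$. The $*$-homomorphism $\pi:\mathcal A\to \mathcal B(\mathcal K)$ should be defined on elementary tensors by $\pi(a)(b\otimes h):=ab\otimes h$. The key verifications here are: (i) $\pi(a)$ preserves $N$, so it descends to the quotient; (ii) $\pi(a)$ is bounded (with $\|\pi(a)\|\le\|a\|$), which I would check using the operator inequality $b^*a^*ab\le\|a\|^2 b^*b$ inside $\mathcal A$ evaluated against the positive form; and (iii) $\pi$ is multiplicative, unital, and $*$-preserving, each of which follows directly from the definition of the inner product. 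Then I would define $V:\mathcal H\to\mathcal K$ by $Vh:=[1_\mathcal A\otimes h]$ and note that $\|Vh\|^2=\langle\phi(1)h,h\rangle$, so $V$ is bounded with $\|V\|^2=\|\phi(1)\|$, and $V$ is an isometry exactly when $\phi(1)=\mathrm{Id}_{\mathcal H}$, giving the unital case. The identity $\phi(a)=V^*\pi(a)V$ drops out of the computation $\langle V^*\pi(a)Vh,k\rangle=\langle \pi(a)(1\otimes h),1\otimes k\rangle_\phi=\langle\phi(a)h,k\rangle$.

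Finally, density of $\{\pi(a)Vh:a\in\mathcal A,\,h\in\mathcal H\}$ in $\mathcal K$ is immediate from the construction, since the span of these vectors equals the image of $\mathcal A\otimes\mathcal H$ in the quotient, whose completion is $\mathcal K$ by definition. The main obstacle, and the only genuinely non-routine step, is step (i): showing $\pi(a)$ is well-defined and bounded on the quotient, which hinges on the matrix-positivity inequality
\begin{equation*}
[\phi(b_j^*a^*ab_i)]_{i,j}\ \le\ \|a\|^2\,[\phi(b_j^*b_i)]_{i,j}
\end{equation*}
in $M_n(\mathcal B(\mathcal H))$; once this is in hand, every other step is formal manipulation. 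This is precisely where complete positivity (not mere positivity) of $\phi$ is indispensable.
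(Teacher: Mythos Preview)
Your proposal is correct and follows the standard GNS-type construction of Stinespring's theorem. Note, however, that the paper does not supply its own proof of this statement: Theorem~\ref{thm;sd} is recalled in the preliminaries with a citation to \cite[Theorem~4.1]{Paulsen}, so there is nothing in the paper to compare against beyond the reference itself. Your argument is essentially the proof given in Paulsen's text, so in that sense it matches the cited source.
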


Corresponding to a completely positive map $\phi : \mathcal{A} \rightarrow \mathcal{B}(\mathcal{H})$, a triple $(\pi, V, \mathcal{K} )$ obtained in Theorem~\ref{thm;sd} is called \textit{a minimal Stinespring representation} for $\phi$. The following proposition states that for a given completely positive map $\phi$, a minimal Stinespring representation is unique up to a unitary equivalence.  

\begin{proposition}[{\cite[Proposition 4.2]{Paulsen}}] \label{prop;umsd}
Let $\mathcal{A}$ be a unital $C^\ast$-algebra, $\mathcal{H}$ be a Hilbert space and let $\phi : \mathcal{A} \rightarrow \mathcal{B}(\mathcal{H})$ be a completely positive map. Suppose $\big (\pi_i, V_i, \mathcal{K}_i \big)_{i = 1, 2}$ be two minimal Stinespring representations for $\phi$. Then there exits a unitary $U : \mathcal{K}_1 \rightarrow \mathcal{K}_2$ such that $UV_1 = V_2$ and $U \pi_1 U^* = \pi_2$. 
\end{proposition}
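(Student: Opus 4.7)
The plan is the classical argument for uniqueness of minimal Stinespring dilations: construct $U$ first on the canonical dense subspaces provided by minimality, verify it is isometric via the identity $V_i^*\pi_i(a)V_i = \phi(a)$, and then extend by continuity.

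First, I would define $U$ on the total set $\{\pi_1(a)V_1 h : a \in \mathcal{A},\, h \in \mathcal{H}\}$ of $\mathcal{K}_1$ (which spans a dense subspace by the minimality clause of Theorem~\ref{thm;sd}) by declaring
\[
U\Bigl(\sum_{i=1}^n \pi_1(a_i) V_1 h_i\Bigr) \;=\; \sum_{i=1}^n \pi_2(a_i) V_2 h_i.
\]
To see that $U$ is well-defined and inner-product preserving on these finite sums, I would compute
\[
\Bigl\langle \sum_i \pi_1(a_i) V_1 h_i,\; \sum_j \pi_1(a_j) V_1 h_j \Bigr\rangle
= \sum_{i,j} \bigl\langle V_1^* \pi_1(a_i^* a_j) V_1 h_j,\, h_i \bigr\rangle
= \sum_{i,j} \bigl\langle \phi(a_i^* a_j) h_j,\, h_i \bigr\rangle,
\]
and note that the analogous computation with $\pi_2, V_2$ yields exactly the same quantity, since both dilations implement $\phi$. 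In particular, if the left-hand vector is zero then its image has norm zero, so $U$ is a well-defined linear isometry on a dense subspace of $\mathcal{K}_1$ whose range is dense in $\mathcal{K}_2$ (again by minimality). Hence $U$ extends uniquely to a unitary $U : \mathcal{K}_1 \to \mathcal{K}_2$.

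It then remains to verify the intertwining properties. For $UV_1 = V_2$, I would write $V_1 h = \pi_1(1_{\mathcal{A}}) V_1 h$ and apply the definition to get $U V_1 h = \pi_2(1_{\mathcal{A}}) V_2 h = V_2 h$ for every $h \in \mathcal{H}$. For $U\pi_1(b) U^* = \pi_2(b)$, I would check $U\pi_1(b) = \pi_2(b) U$ on the dense set of vectors of the form $\pi_1(a)V_1 h$: indeed
\[
U\pi_1(b)\bigl(\pi_1(a) V_1 h\bigr) = U\bigl(\pi_1(ba) V_1 h\bigr) = \pi_2(ba) V_2 h = \pi_2(b)\pi_2(a) V_2 h = \pi_2(b) U\bigl(\pi_1(a) V_1 h\bigr),
\]
and then pass to the norm-closure by continuity.

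The only genuine point requiring care, and what I would treat as the main obstacle, is the well-definedness of $U$ on finite sums; everything else is a routine density/continuity argument. This is handled cleanly by the inner-product computation above, which shows simultaneously that $U$ is well-defined and isometric, because both dilations reproduce the same positive sesquilinear form $(a_i \otimes h_i,\, a_j \otimes h_j) \mapsto \langle \phi(a_i^* a_j) h_j, h_i\rangle$ determined by $\phi$ alone.
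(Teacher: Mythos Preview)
Your argument is correct and is exactly the standard proof of uniqueness of the minimal Stinespring dilation. Note, however, that the paper itself does not give a proof of this proposition: it is stated with a direct citation to \cite[Proposition~4.2]{Paulsen} and used as a black box throughout, so there is no ``paper's own proof'' to compare against. What you have written is essentially the argument one finds in Paulsen's book, so nothing is missing.
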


\noindent 
Let  $\phi : \mathcal{A} \rightarrow \mathcal{B}(\mathcal{H})$ be a completely positive map, and let $\big (\pi, V, \mathcal{K} \big )$ be its minimal Stinespring representation. By following Proposition \ref{prop;umsd}, we refer $\big (\pi, V, \mathcal{K} \big )$ as \textit{the minimal Stinespring representation} for $\phi$. Next, for a unital $C^\ast$-algebra $\mathcal{A}$ and a Hilbert space $\mathcal{H}$, we consider the set
\begin{equation*}
\cpa := \big \{ \phi : \mathcal{A} \rightarrow \mathcal{B}(\mathcal{H}) \; \; : \; \;  \phi \; \; \text{is a completely positive map} \big \},
\end{equation*} 
with a partial order "$\leq$". For $\phi_1, \phi_2 \in \cpa$ 
\begin{equation*}
\phi_1 \leq \phi_2, \; \; \; \text{whenever,} \; \; \; \phi_2 - \phi_1 \in \cpa.
\end{equation*}
For each fixed $\phi \in \cpa$, consider the set
\begin{equation*}
[0, \phi] = \big \{ \theta \in \cpa \; \; : \; \; \theta \leq \phi \big \}.
\end{equation*} 
Let $\phi \in \cpa$ and $\big (\pi, V, \mathcal{K} \big )$ be the minimal Stinespring representation for $\phi$. Then for each fixed $T \in \pacom$ define a map 
$\phi_T : \mathcal{A} \rightarrow \mathcal{B}(\mathcal{H})$ by $\phi_T(\cdot) := V^* \pi (\cdot) T V$. From \cite{Arveson1}, now we recall the Radon--Nikodym type result for completely positive maps 

\begin{theorem} [{\cite[Theorem 1.4.2]{Arveson1}}] \label{thm;arnd}
Let $\phi \in \cpa$ and $\big (\pi, V, \mathcal{K} \big )$ be the minimal Stinespring representation for $\phi$. Then there exists an affine order isomorphism from the partially ordered convex set of operators $ \big \{T \in \pacom \; \; : \; \;  0 \leq T \leq  1_\mathcal{K} \}$ onto $[0, \phi]$, which is given by the map 
\begin{equation*}
T \mapsto \phi_T(\cdot) := V^* \pi (\cdot) T V.
\end{equation*}
\end{theorem}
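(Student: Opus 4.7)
The plan is to exhibit the inverse of $T \mapsto \phi_T$ explicitly and verify the affine and order properties along the way. The forward direction is quick: for $T \in \pacom$ with $0 \le T \le 1_\mathcal{K}$, the positive square root $T^{1/2}$ also lies in $\pacom$, so
\begin{equation*}
\phi_T(a) = V^*\pi(a)TV = (T^{1/2}V)^* \pi(a) (T^{1/2}V),
\end{equation*}
which is manifestly in $\cpa$. The identical argument applied to $1_\mathcal{K} - T$ shows $\phi - \phi_T \in \cpa$, so $\phi_T \in [0, \phi]$. Linearity in $T$ delivers affineness, and the same factorization applied to $T_2 - T_1$ gives order-preservation in both directions on the positive cone.

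For injectivity, suppose $\phi_{T_1} = \phi_{T_2}$ and set $S = T_1 - T_2 \in \pacom$, which is self-adjoint. Pushing $S$ through $\pi(a)$ and $\pi(b)$ via the commutation relation yields
\begin{equation*}
\langle S\pi(a)Vh, \pi(b)Vk\rangle = \langle h, V^*\pi(a^*b)SVk\rangle = \langle h, \phi_S(a^*b)k\rangle = 0
\end{equation*}
for all $a, b \in \mathcal{A}$ and $h, k \in \mathcal{H}$. Density of $\{\pi(a)Vh\}$ in $\mathcal{K}$, guaranteed by minimality, then forces $S = 0$.

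The main difficulty is surjectivity. Given $\psi \in [0, \phi]$, I would define a sesquilinear form on the dense subspace $\mathcal{K}_0 := \text{span}\{\pi(a)Vh : a \in \mathcal{A}, h \in \mathcal{H}\}$ by
\begin{equation*}
B\Big(\sum_i \pi(a_i)Vh_i, \sum_j \pi(b_j)Vk_j\Big) := \sum_{i,j}\langle \psi(b_j^*a_i)h_i, k_j\rangle.
\end{equation*}
The key estimate is that complete positivity of both $\psi$ and $\phi - \psi$, applied to the matrix $[a_j^*a_i]$, forces $0 \le B(\xi, \xi) \le \|\xi\|_\mathcal{K}^2$ for every $\xi \in \mathcal{K}_0$. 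This simultaneously establishes that $B$ is well-defined (a vector that is null in $\mathcal{K}$ is $B$-null, by Cauchy--Schwarz applied to the upper bound) and extends by continuity to a bounded positive form on all of $\mathcal{K}$. Riesz representation then produces $T \in \mathcal{B}(\mathcal{K})$ with $0 \le T \le 1_\mathcal{K}$ and $B(\cdot, \cdot) = \langle T\, \cdot, \cdot\rangle$. A direct calculation
\begin{equation*}
\langle T\pi(c)\pi(a)Vh, \pi(b)Vk\rangle = \langle \psi(b^*ca)h, k\rangle = \langle \pi(c)T\pi(a)Vh, \pi(b)Vk\rangle
\end{equation*}
gives $T \in \pacom$, and substituting $a = 1_\mathcal{A}$ into the definition of $B$ recovers $\phi_T = \psi$. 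The principal obstacle is precisely this matrix-level positivity estimate controlling $B$: mere positivity of $\phi$ and $\psi$ would not suffice to bound $B$ on sums of arbitrarily many terms, and so complete positivity enters essentially at exactly this step.
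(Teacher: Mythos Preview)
The paper does not supply a proof of this theorem; it is quoted from Arveson \cite{Arveson1} as a known preliminary result. Your argument is correct and is essentially Arveson's original proof: the forward map is handled by factoring through $T^{1/2}$, and the inverse is constructed by defining the sesquilinear form $B$ on the dense span, bounding it between $0$ and $\|\cdot\|_{\mathcal{K}}^2$ via complete positivity of $\psi$ and $\phi-\psi$ applied to the Gram-type matrix $[a_j^*a_i]$, and then invoking minimality both for well-definedness (via Cauchy--Schwarz on $B$) and for the commutation $T \in \pacom$.
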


Next, we briefly recall some important results from \cite{BK3}. For a unital $C^\ast$-algebra $\mathcal{A}$ and a Hilbert space $\mathcal{H}$, we recall the following notations:
\begin{align*}
\cpa &= \big \{ \phi : \mathcal{A} \rightarrow \mathcal{B}(\mathcal{H}) \; \; : \; \;  \phi \; \; \text{is a completely positive map} \big \}; \\
\ucpa &= \big \{ \phi : \mathcal{A} \rightarrow \mathcal{B}(\mathcal{H}) \; \; : \; \;  \phi \; \; \text{is a unital completely positive map} \big \}.
\end{align*}
Let $G$ be a group and $\tau$ be an action of $G$ on $\mathcal{A}$ through (unital) $C^\ast$-automorphisms. Then consider the following sets:
\begin{align*}
\cpapg &= \big \{ \phi \in \cpa  \; \; : \; \;   \phi (\tau_g(a)) = \phi(a), \; \; \text{for all} \; g \in G \; \; \text{and} \; \; a \in \mathcal{A} \big \}; \\
\ucpapg &:= \left \{ \phi \in \ucpa \; \; : \; \;  \phi(\tau_g(a)) = \phi (a), \; \; \text{for all} \; \; g \in G \; \; \text{and} \; \; a \in \mathcal{A} \right \}.
\end{align*}
Let $\phi \in \ucpapg$, and let the minimal Stinespring representation of $\phi$ be given by $\big (\pi, V, \mathcal{K} \big )$. Since $\phi \in \ucpapg$, for $a\in \mathcal{A}$ and $g \in G$, we have 
\begin{equation*}
\phi(a) = V^* \pi(a) V = \phi(\tau_g(a)) = V^* \pi(\tau_g(a)) V.
\end{equation*} 
This implies that for a fixed $g \in G$ the minimal Stinespring dilation of $\phi$ can also be given by $\big (\pi \circ \tau_g, V, \mathcal{K} \big )$. Now, by the uniqueness of the minimal Stinespring dilation (see Proposition \ref{prop;umsd}), we get a unitary operator, say $U_g : \mathcal{K} \rightarrow \mathcal{K}$ satisfying 
\begin{equation*}
U_gV = V  ~~~ \text{and} ~~~ U_g \pi U^*_g = \pi \circ \tau_g.
\end{equation*}
Since $g \in G$ was arbitrarily chosen, we get a representation of the group $G$ on the Hilbert space $\mathcal{K}$ which we denote by $U_\phi : G \rightarrow \mathcal{B}(\mathcal{K})$, and is defined as:
\begin{equation*}
U_\phi(g) := U_g, \; \; \; \; \;  \text{for all} \; \; \; g \in G.
\end{equation*}
Moreover, we have
\begin{equation*}
U_\phi(g)V = V  \; \; \; \text{and} \; \; \; U_\phi(g) \pi(a) U_\phi(g)^* = \pi \circ \tau_g(a), \; \; \; \; \;  \text{for all} \; \; \; a \in A, ~ g \in G.
\end{equation*}

\begin{remark} \label{rem; unitary rep of G}
Let $\phi \in \ucpapg$, and let the minimal Stinespring representation of $\phi$ be given by $\big (\pi, V, \mathcal{K} \big )$. For a fixed $\phi \in \ucpapg$, from the discussion above, we get a representation of the group $G$ on the Hilbert space $\mathcal{K}$ denoted by $U_\phi : G \rightarrow \mathcal{B}(\mathcal{K})$, and which satisfies
\begin{equation*}
U_\phi(g)V = V  \; \; \; \text{and} \; \; \; U_\phi(g) \pi(a) U_\phi(g)^* = \pi \circ \tau_g(a), \; \; \; \; \;  \text{for all} \; \; \; a \in A, ~ g \in G.
\end{equation*}
\end{remark}

The following theorem established in \cite{BK3} gives the Radon--Nikodym type result in the space $\cpapg$. 

\begin{theorem} [{\cite[Lemma 4.1]{BK3}}] \label{lem;rnd}
Let $\phi \in \ucpapg$, and let the minimal Stinespring representation of $\phi$ be given by $\big (\pi, V, \mathcal{K} \big )$. Then, the map $T \mapsto \phi_T$, where
\begin{equation*}
\phi_T(a) := V^\ast \pi(a) T V \; \; \; \text{for all} \; \; a \in \mathcal{A},
\end{equation*}
is an affine isomorphism from the partially ordered convex set of operators $\big \{ T \in  \big (\pi(\mathcal{A}) \cup U_\phi(G) \big )^\prime \; \; : \; \;  0 \leq T \leq \mathrm{Id}_\mathcal{K} \big \}$ onto $[0, \phi] \cap \cpapg$.
\end{theorem}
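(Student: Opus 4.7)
The plan is to deduce this statement from Arveson's Radon--Nikodym theorem (Theorem~\ref{thm;arnd}), which already supplies an affine order isomorphism $T \mapsto \phi_T$ from $\{T \in \pi(\mathcal{A})' : 0 \leq T \leq \mathrm{Id}_\mathcal{K}\}$ onto $[0,\phi]$. The only real task is to verify that imposing the extra constraint $T \in U_\phi(G)'$ on the domain corresponds, under this bijection, precisely to the extra constraint $\phi_T \in \cpapg$ on the codomain; affineness and order preservation then transfer automatically by restricting the Arveson isomorphism to a convex subset of its domain.

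For the easy direction, assume $T$ commutes with every $U_\phi(g)$. Using the covariance $\pi \circ \tau_g = U_\phi(g)\pi(\cdot)U_\phi(g)^*$ of Remark~\ref{rem; unitary rep of G} together with $U_\phi(g)V = V$ (and its consequences $V^*U_\phi(g) = V^*$ and $U_\phi(g)^*V = V$, which follow by taking adjoints and applying the identity to $g^{-1}$), I would compute
\[
\phi_T(\tau_g(a)) = V^*U_\phi(g)\pi(a)U_\phi(g)^*TV = V^*\pi(a)U_\phi(g)^*TV = V^*\pi(a)TU_\phi(g)^*V = V^*\pi(a)TV = \phi_T(a),
\]
so $\phi_T \in \cpapg$.

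For the converse, given $\phi_T \in [0,\phi] \cap \cpapg$ with $T \in \pi(\mathcal{A})'$ and $0 \leq T \leq \mathrm{Id}_\mathcal{K}$, I would set $T_g := U_\phi(g)^*TU_\phi(g)$ for each $g \in G$ and verify three points: (i) $T_g \in \pi(\mathcal{A})'$, by using $U_\phi(g)\pi(b) = \pi(\tau_g(b))U_\phi(g)$ to move $U_\phi(g)$ past $\pi(b)$ and then exploiting that $T$ commutes with $\pi(\tau_g(b))$; (ii) $0 \leq T_g \leq \mathrm{Id}_\mathcal{K}$, which is immediate from unitarity of $U_\phi(g)$; and (iii) $\phi_{T_g}(a) = \phi_T(\tau_g(a))$, by running the chain of equalities from the easy direction without the commutation assumption. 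The hypothesis $\phi_T \in \cpapg$ then yields $\phi_{T_g} = \phi_T$, and the injectivity of $T \mapsto \phi_T$ in Theorem~\ref{thm;arnd} forces $T_g = T$, i.e.\ $T U_\phi(g) = U_\phi(g) T$. The main obstacle is precisely this converse, and within it the covariance bookkeeping for $U_\phi(g)$ and $U_\phi(g)^*$ against $\pi(\cdot)$ and $V$; once $\phi_{T_g} = \phi_T \circ \tau_g$ is in hand, the uniqueness half of Theorem~\ref{thm;arnd} does the rest.
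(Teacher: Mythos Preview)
The paper does not supply its own proof of this statement---it is recalled from \cite[Lemma 4.1]{BK3} as a preliminary result---so there is nothing in the present paper to compare against. Your argument is correct: restricting Arveson's bijection (Theorem~\ref{thm;arnd}) and verifying that the extra commutation constraint $T \in U_\phi(G)'$ corresponds exactly to the $G$-invariance constraint $\phi_T \in \cpapg$, via the conjugate $T_g = U_\phi(g)^*TU_\phi(g)$ and the uniqueness half of Theorem~\ref{thm;arnd}, is the natural route, and your covariance bookkeeping with $U_\phi(g)V = V$ and $U_\phi(g)\pi(a)U_\phi(g)^* = \pi(\tau_g(a))$ is sound.
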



\section{$C^\ast$-extreme Points: Some Sufficient Conditions} \label{sec;C star extreme points Sufficient Conditions}

Let $G$ be a group, $\mathcal{A}$ be a unital $C^\ast$-algebra, and $\mathcal{H}$ be a Hilbert space. Suppose $\tau$ is an action of $G$ on $\mathcal{A}$ through (unital) $C^\ast$-automorphisms. We quickly recall the following notations: 
\begin{align*}
\ucpa &:= \left \{ \phi : \mathcal{A} \rightarrow \mathcal{B}(\mathcal{H}) \; \; : \; \; \phi \; \; \text{is unital completely positive map} \right \}; \\
\ucpapg &:= \left \{ \phi \in \ucpa \; \; : \; \;  \phi(\tau_g(a)) = \phi (a), \; \; \text{for all} \; \; g \in G \; \; \text{and} \; \; a \in \mathcal{A} \right \}.
\end{align*}

\begin{example} \label{ex;Ginv} We now present a couple of examples of maps in the set $\ucpapg$.
\begin{enumerate}
\item Let $G$ be a finite group, $\mathcal{A}$ be a unital $C^\ast$-algebra, and $\mathcal{H}$ be a Hilbert space. Suppose $\tau$ is an action of $G$ on $\mathcal{A}$ through (unital) $C^\ast$-automorphisms. Let $\phi \in \ucpa$. Define a map $\tilde{\phi} : \mathcal{A} \rightarrow \mathcal{B}(\mathcal{H})$ by
\begin{equation*}
\tilde{\phi}(a) := \phi \left ( \frac{\sum\limits_{g \in G} \tau_g(a)}{|G|} \right ), \;  \; \; \; \text{for all} \; \; a \in \mathcal{A}.
\end{equation*}
Then $\tilde{\phi} \in \ucpapg$

\item Let $G$ be a compact group, and let $\lambda$ be the Haar measure on $G$. Fix a Hilbert space $\mathcal{H}$, and a $C^\ast$-subalgebra $\mathcal{A}$ of $\mathcal{B}(\mathcal{H})$. Let $\{ W_g \}_{g \in G}$ be a family of unitaries in $\mathcal{A}$, and $\tau$ be an action of the group $G$ on $\mathcal{A}$ that is given by 
\begin{equation*}
\tau_g(a) := W_g^* a W_g, \; \; \; \text{for all} \; \; a \in \mathcal{A}.
\end{equation*}
Consider a map $\phi : \mathcal{A} \rightarrow \mathcal{B}(\mathcal{H})$ defined by
\begin{equation*}
\phi(a) := \int_{G} W_g^*aW_g \, \mathrm{d} \lambda, \; \; \; \text{for all} \; \; a \in \mathcal{A}.
\end{equation*}
where the integral is observed in a weak sense. That is, for $h_1, h_2 \in \mathcal{H}$, we have
\begin{equation*}
\big \langle \phi(a)h_1, h_2 \big \rangle =  \int_{G} \la W_g^*aW_gh_1, h_2 \ra \, \mathrm{d} \lambda.
\end{equation*}
Then $\phi$ is a unital completely positive map. Now fix some $g_0 \in G$. Then for any $a \in \mathcal{A}$ and $h_1, h_2 \in \mathcal{H}$, we have
\begin{eqnarray*}
\la \phi(\tau_{g_0}(a))h_1, h_2 \ra &=&  \int_{G} \la W_g^*\tau_{g_0}(a)W_gh_1, h_2 \ra \, \mathrm{d} \lambda \\
&=& \int_{G} \la W_g^*W_{g_0}^* a W_{g_0}W_gh_1, h_2 \ra \, \mathrm{d} \lambda \\
&=& \int_{G} \la W_{g_0 g}^* a W_{g_0g}h_1, h_2 \ra \, \mathrm{d} \lambda \\
&=& \la \phi(a)h_1, h_2 \ra.
\end{eqnarray*} 
This shows that the map $\phi \in \ucpapg$.
\end{enumerate}
\end{example}

Let $G$ be a group, $\mathcal{A}$ be a unital $C^\ast$-algebra, and $\mathcal{H}$ be a Hilbert space. Suppose $\tau$ is an action of $G$ on $\mathcal{A}$ through (unital) $C^\ast$-automorphisms. Let $\{ \phi_i \}^n_{i =1}$ be a collection of elements in $\ucpapg$. Then a $C^\ast$-convex combination of $\{ \phi_i \}^n_{i =1}$ is given by 
\begin{equation*}
\phi(\cdot) := \sum^n_{i =1} T^\ast_i \phi_i(\cdot) T_i, \; \; \; \text{where each} \; \; T_i \in \mathcal{B}(\mathcal{H}) \; \; \; \text{and} \; \; \sum\limits^n_{i =1} T^\ast_i T_i = \mathrm{Id}_\mathcal{H}.
\end{equation*}
One may see that $\phi$ is a unital completely positive map, and moreover
\begin{equation*}
\phi(\tau_g(a)) = \sum^n_{i =1} T^\ast_i \phi_i(\tau_g(a)) T_i =  \sum^n_{i =1} T^\ast_i \phi_i(a) T_i = \phi (a)
\end{equation*}
for all $g \in G$ and $a \in \mathcal{A}$. Thus $\phi \in \ucpapg$. This shows that $\ucpapg$ is a $C^\ast$-convex subset of $\ucpa$. Also, the set $\ucpapg$ is a closed subset of $\ucpa$ and hence compact in BW-topology (see \cite{BK3}). Following the notion of $C^\ast$-extreme point in $\ucpa$ introduced in \cite{FM}, we call an element $\phi \in \ucpapg$ to be a $C^\ast$-extreme point of $\ucpapg$ if, whenever
$\phi$ is written as a \textit{proper} $C^\ast$-convex combination, that is, 
\begin{equation*}
\phi(\cdot) := \sum^n_{i =1} T^\ast_i \phi_i(\cdot) T_i, \; \; \text{where each} \; \; \phi_i \in \ucpapg, \; \; T_i \in \mathcal{B}(\mathcal{H}), \; \; \; \text{and} \; \; \sum\limits^n_{i =1} T^\ast_i T_i = \mathrm{Id}_\mathcal{H}.
\end{equation*}
with $T^{-1}_i$ exists for all $1 \leq i \leq n$, then there exist unitaries $\big \{ U_i \in  \mathcal{B}(\mathcal{H}) \big \}^n_{i =1}$ such that
\begin{equation*}
\phi_i(\cdot) = U^\ast_i \phi(\cdot) U_i, \; \; \; \text{for all} \; \; 1 \leq i \leq n.
\end{equation*}
In such case, we denote this by $\phi_i \sim \phi$ for all $1 \leq i \leq n$.

In this section, we present some sufficient conditions under which a completely positive map in $\ucpapg$ is a $C^\ast$-extreme point. Before this, when $\mathcal{H}$ is finite dimensional, we show that every $C^\ast$-extreme point of $\ucpapg$ is also a linear extreme point of $\ucpapg$.

\begin{proposition} \label{prop; C star extreme implies extreme}
Let $\mathcal{H}$ be a finite dimensional Hilbert space, and let $\phi$ be a $C^\ast$-extreme point of $\ucpapg$. Then $\phi$ is an extreme point of $\ucpapg$.
\end{proposition}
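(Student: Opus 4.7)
The plan is to show that any ordinary convex decomposition $\phi = t\phi_1 + (1-t)\phi_2$ with $\phi_1,\phi_2 \in \ucpapg$ and $t \in (0,1)$ is forced to be trivial. First I would repackage this linear convex combination as a $C^\ast$-convex combination by setting $T_1 = \sqrt{t}\,\mathrm{Id}_\mathcal{H}$ and $T_2 = \sqrt{1-t}\,\mathrm{Id}_\mathcal{H}$. These satisfy $T_1^\ast T_1 + T_2^\ast T_2 = \mathrm{Id}_\mathcal{H}$, and both are invertible because $t \in (0,1)$, so the decomposition
\begin{equation*}
\phi(\cdot) = T_1^\ast \phi_1(\cdot) T_1 + T_2^\ast \phi_2(\cdot) T_2
\end{equation*}
is a \emph{proper} $C^\ast$-convex combination in the sense defined in the paper.

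Since $\phi$ is a $C^\ast$-extreme point of $\ucpapg$, there exist unitaries $U_1, U_2 \in \mathcal{B}(\mathcal{H})$ with $\phi_i(a) = U_i^\ast \phi(a) U_i$ for every $a \in \mathcal{A}$ and $i=1,2$. Substituting these back into the original decomposition yields
\begin{equation*}
\phi(a) = t\, U_1^\ast \phi(a) U_1 + (1-t)\, U_2^\ast \phi(a) U_2 \qquad (a \in \mathcal{A}).
\end{equation*}
This is the point at which finite dimensionality enters. I would equip $\mathcal{B}(\mathcal{H})$ with the Hilbert--Schmidt inner product $\langle A,B\rangle_{HS} = \mathrm{Tr}(A^\ast B)$, which is well-defined because $\dim\mathcal{H} < \infty$. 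The induced norm $\|\cdot\|_{HS}$ is unitarily invariant, so the three operators $\phi(a)$, $U_1^\ast\phi(a) U_1$ and $U_2^\ast\phi(a) U_2$ share a common Hilbert--Schmidt norm. Since $\|\cdot\|_{HS}$ is induced by an inner product, it is strictly convex; therefore the displayed identity expresses $\phi(a)$ as a non-trivial convex combination of two vectors of its own norm, which forces
\begin{equation*}
U_1^\ast \phi(a) U_1 = U_2^\ast \phi(a) U_2 = \phi(a).
\end{equation*}
Hence $\phi_1(a) = \phi_2(a) = \phi(a)$ for every $a$, so $\phi$ is a linear extreme point of $\ucpapg$.

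The only delicate point is checking that the rewriting step produces a legitimate proper $C^\ast$-convex combination inside $\ucpapg$, but this is immediate: the $T_i$ are scalar multiples of the identity (hence invertible), and $\phi_1, \phi_2 \in \ucpapg$ by hypothesis, so the $G$-invariance condition plays no active role beyond being inherited trivially. There is no serious obstacle; the argument is essentially the standard strict-convexity trick applied to the unitary orbits produced by $C^\ast$-extremality, and it should go through verbatim in the $G$-invariant setting.
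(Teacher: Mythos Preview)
Your proof is correct. The opening moves coincide with the paper's: both rewrite $\phi = t\phi_1 + (1-t)\phi_2$ as a proper $C^\ast$-convex combination with scalar coefficients $T_i = \sqrt{t}\,\mathrm{Id}_\mathcal{H}$, $\sqrt{1-t}\,\mathrm{Id}_\mathcal{H}$, invoke $C^\ast$-extremality to produce unitaries with $\phi_i(a) = U_i^\ast \phi(a) U_i$, and substitute back to get $\phi(a)$ expressed as a convex combination of two points in its own unitary orbit.

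The divergence is in the final step. The paper appeals to Loebl's result \cite{L} that in finite dimensions every operator is an extreme point of the convex hull of its unitary orbit, and reads off $\phi_1(a) = \phi_2(a) = \phi(a)$ directly. You instead prove this special case by hand: the Hilbert--Schmidt norm is unitarily invariant and strictly convex (coming from an inner product), so equality in the triangle inequality forces $U_1^\ast\phi(a)U_1 = U_2^\ast\phi(a)U_2 = \phi(a)$. Your route is more self-contained and avoids the external citation; the paper's route is shorter once one is willing to quote Loebl. Substantively they amount to the same mechanism, since Loebl's theorem is itself proved via unitarily invariant (strictly convex) norms.
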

\begin{proof}
Let $\phi(\cdot) = \lambda \phi_1(\cdot) + (1 - \lambda) \phi_2(\cdot)$, where $\phi_1, \phi_2 \in \ucpapg$ and $\lambda \in (0, 1)$. This implies $\phi(\cdot) = (\lambda)^\frac{1}{2} \mathrm{Id}_{\mathcal{H}} \phi_1(\cdot) (\lambda)^\frac{1}{2} \mathrm{Id}_{\mathcal{H}} + (1 - \lambda)^\frac{1}{2} \mathrm{Id}_{\mathcal{H}}  \phi_2(\cdot) (1 - \lambda)^\frac{1}{2} \mathrm{Id}_{\mathcal{H}}$. Since $\phi$ is a $C^\ast$-extreme point of $\ucpapg$, we get $\phi \sim \phi_1 \sim \phi_2$. Thus there exist $U_1, U_2$ unitaries in $\mathcal{B}(\mathcal{H})$ such that for each $a \in \mathcal{A}$, we have  
\begin{equation} \label{eq; phi a is unitarily equi to phi i of a}
\phi(a) = U_i^\ast \phi_i(a) U_i \; \; \; \; \text{for} \; \; i \in \{ 1, 2 \}.
\end{equation}
Now by recalling that $\mathcal{H}$ is a finite dimensional Hilbert space, we get from \cite{L} that $\phi(a)$ is an extreme point of the convex hull of its unitary orbit for every $a \in \mathcal{A}$. Thus, $\phi(a) = \phi_1(a) = \phi_2(a)$ for each $a \in \mathcal{A}$. Hence, we get the result.
\end{proof}

In the remaining part of this section, we prove some results that give sufficient conditions for an element $\phi \in \ucpapg$ to be a $C^\ast$-extreme point of $\ucpapg$.

\begin{definition}
Let $\omega : \mathcal{A} \rightarrow \mathbb{C}$ be a state. Then an inflation $\phi_\omega \in \ucpa$ corresponding to $\omega$ is given by $\phi_\omega(a) : = \omega(a) \mathrm{Id}_{\mathcal{H}}$. 
\end{definition}

\begin{proposition} \label{prop; sufficient conditions}
Consider $\phi \in \ucpapg$ with the minimal Stinespring representation for $\phi$ is given by $\big (\pi, V, \mathcal{K} \big )$. If one of the following conditions holds, then $\phi$ is a $C^\ast$-extreme point of $\ucpapg$.
\begin{enumerate}
\item $\phi$ is an inflation of some pure state $\omega \in \ucpapgc$;
\item $\phi$ is multiplicative;
\item $V\mathcal{H}$ is invariant under $\big (\pi(\mathcal{A}) \cup U_\phi(G) \big )^\prime$, where $U_\phi$ is the representation of $G$ as given in Remark \ref{rem; unitary rep of G};
\item $\phi$ is a pure element of $\cpa$.
\end{enumerate}
\end{proposition}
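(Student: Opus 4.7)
The strategy is to treat all four conditions as variants of a single template. Fix a proper $C^\ast$-convex decomposition $\phi = \sum_{i=1}^n T_i^\ast \phi_i T_i$ with $\phi_i \in \ucpapg$, each $T_i$ invertible, and $\sum T_i^\ast T_i = \mathrm{Id}_\mathcal{H}$, and construct a unitary $U_i \in \mathcal{B}(\mathcal{H})$ with $\phi_i(\cdot) = U_i^\ast \phi(\cdot) U_i$. Since each $\phi_i$ is $\tau$-invariant, the map $a \mapsto T_i^\ast \phi_i(a) T_i$ lies in $[0,\phi]\cap \cpapg$, so Theorem \ref{lem;rnd} produces $R_i \in (\pi(\mathcal{A}) \cup U_\phi(G))^\prime$ with $0 \leq R_i \leq \mathrm{Id}_\mathcal{K}$ and $T_i^\ast \phi_i(a) T_i = V^\ast \pi(a) R_i V$ for all $a$. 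This identity will drive (3) and (4), while (1) and (2) are handled directly.

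For (1), write $\phi(a) = \omega(a)\mathrm{Id}_\mathcal{H}$ with $\omega$ pure. For each unit vector $h$, the scalars $c_i := \|T_i h\|^2$ are strictly positive and sum to $1$, and $\omega_i^h(a) := \langle \phi_i(a) T_i h, T_i h\rangle/c_i$ defines a state on $\mathcal{A}$. Evaluating the decomposition at $h$ yields $\omega = \sum c_i \omega_i^h$, so purity of $\omega$ forces $\omega_i^h = \omega$ for every $i$. Polarization in $h$ then gives $T_i^\ast \phi_i(a) T_i = \omega(a) T_i^\ast T_i$, and invertibility of $T_i$ delivers $\phi_i = \phi$, so $U_i = \mathrm{Id}_\mathcal{H}$ suffices.

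For (2), stack the $T_i$'s into an isometry $T : \mathcal{H} \to \mathcal{H}^n$ and set $\Phi(a) := \mathrm{diag}(\phi_1(a),\ldots,\phi_n(a))$, so $\phi(a) = T^\ast \Phi(a) T$ and $TT^\ast$ is a projection. The Schwarz chain
\begin{equation*}
\phi(a)^\ast \phi(a) = T^\ast \Phi(a)^\ast TT^\ast \Phi(a) T \leq T^\ast \Phi(a)^\ast \Phi(a) T \leq T^\ast \Phi(a^\ast a) T = \phi(a^\ast a)
\end{equation*}
collapses to equalities because $\phi$ is multiplicative; the first equality forces $(\mathrm{Id} - TT^\ast)\Phi(a) T = 0$, equivalently $\phi_i(a) T_i = T_i \phi(a)$ for every $i$ and $a$. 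Replacing $a$ by $a^\ast$ and taking adjoints shows $T_i^\ast T_i \in \phi(\mathcal{A})^\prime$; polar-decomposing $T_i = U_i^\ast |T_i|$ with $U_i$ unitary (by invertibility) and $|T_i| \in \phi(\mathcal{A})^\prime$ (by continuous functional calculus) yields $\phi_i = U_i^\ast \phi U_i$.

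For (3), invariance of $V\mathcal{H}$ under the self-adjoint set $(\pi(\mathcal{A})\cup U_\phi(G))^\prime$ makes $V\mathcal{H}$ a reducing subspace for each $R_i$, so $R_i V = VV^\ast R_i V$; evaluating at $a = 1$ identifies $V^\ast R_i V = T_i^\ast T_i$, whence $T_i^\ast \phi_i(a) T_i = \phi(a) T_i^\ast T_i$. Taking adjoints gives $T_i^\ast T_i \in \phi(\mathcal{A})^\prime$, and the polar-decomposition step from (2) applies verbatim. For (4), Arveson's characterization of pure elements of $\cpa$ gives $\pi(\mathcal{A})^\prime = \mathbb{C}\mathrm{Id}_\mathcal{K}$, so $R_i = \lambda_i \mathrm{Id}_\mathcal{K}$ for some $\lambda_i \in [0,1]$; then $T_i^\ast T_i = \lambda_i \mathrm{Id}_\mathcal{H}$ with $\lambda_i > 0$ (invertibility), and $U_i := (T_i/\sqrt{\lambda_i})^\ast$ is unitary with $\phi_i = U_i^\ast \phi U_i$. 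The main technical point, shared by (2) and (3), is that the positive factor $|T_i|$ in the polar decomposition inherits commutation with $\phi(\mathcal{A})$ from $T_i^\ast T_i$; this is the one step requiring genuine care, though it reduces to standard functional calculus once the commutation of $T_i^\ast T_i$ with $\phi(\mathcal{A})$ is established.
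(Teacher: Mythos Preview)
Your proof is correct. For (1), (3), and (4) the argument is essentially the paper's: the same purity-plus-pointwise argument in (1) (you pass from $\omega_i^h=\omega$ to the operator identity by polarization where the paper instead uses surjectivity of $T_i$ onto the unit sphere), the same use of the invariant Radon--Nikodym derivative together with $VV^\ast$ commuting with $(\pi(\mathcal{A})\cup U_\phi(G))'$ in (3), and the same trivial-commutant reduction in (4). The one genuine divergence is (2). The paper notes that for multiplicative $\phi$ the minimal dilation is $(\phi,\mathrm{Id}_\mathcal{H},\mathcal{H})$ and applies Arveson's Radon--Nikodym theorem to get $S_i\in\phi(\mathcal{A})'$ with $T_i^\ast\phi_iT_i=S_i\phi$ and $S_i=T_i^\ast T_i$, then checks that $S_i^{1/2}T_i^{-1}$ is unitary. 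You instead bypass Radon--Nikodym with a Schwarz-inequality argument on the stacked map $T^\ast\Phi(\cdot)T$, forcing the intertwining relation $\phi_i(a)T_i=T_i\phi(a)$ directly. Your route is more elementary and yields a slightly sharper intermediate conclusion; the paper's route is quicker once the dilation machinery is already on the table and makes (2) visibly parallel to (3). One small remark: in (3) the identity you reach is $T_i^\ast\phi_i(a)T_i=\phi(a)T_i^\ast T_i$, not $\phi_i(a)T_i=T_i\phi(a)$, so the polar-decomposition step is not literally ``verbatim'' from (2)---but the same cancellation of the invertible factor $|T_i|\in\phi(\mathcal{A})'$ on both sides does the job.
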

\begin{proof}
\textbf{Proof of (1):} Suppose $\phi$ is an inflation of a pure state $\omega \in \ucpapgc$. Let $\phi(\cdot) = \sum\limits^k_{i =1} T_i^\ast \phi_i(\cdot) T_i$ be a proper $C^\ast$-convex combination, where each $\phi_i \in \ucpapg$. Fix a unit vector $\xi \in \mathcal{H}$. For each $i \in \{ 1, 2, \cdots, k \}$, define a map $\omega^\xi_i \colon \mathcal{A} \rightarrow \mathbb{C}$ by 
\begin{equation} \label{eqn; omega xi i}
\omega^\xi_i(a) := \frac{1}{\| T_i \xi \|^2} \big \langle \phi_i(a)T_i \xi, T_i \xi \big  \rangle, \; \; \; \text{for} \; \; a \in \mathcal{A}.
\end{equation}
Then by following Equation \eqref{eqn; omega xi i}, we get
\begin{equation*}
\omega(a) = \big \langle \omega(a) \mathrm{Id}_{\mathcal{H}} \xi, \xi \big \rangle = \big \langle \phi(a) \xi, \xi \big \rangle = \sum\limits^k_{i =1} \big \langle \phi_i(a) T_i\xi, T_i\xi \big \rangle = \sum\limits^k_{i =1} \big \| T_i \xi \big \|^2 \omega^\xi_i(a).
\end{equation*}
As $\omega$ is a pure state, we obtain $\omega = \omega^\xi_i$, for each $i \in \{ 1, 2, \cdots, k \}$. We note that the unit vector $\xi \in \mathcal{H}$ was chosen arbitrarily. This observation will be used in the following computations. Fix $i \in \{ 1, 2, \cdots, k \}$, and a unit vector $h \in \mathcal{H}$. Since $T_i$ is invertible, there exists a unit vector $h^\prime \in \mathcal{H}$ such that $\frac{T_ih^\prime}{\|T_ih^\prime \|} = h$. Thus, for each $a \in \mathcal{A}$, we get
\begin{equation*}
\big \langle \phi(a)h, h \big \rangle = \omega(a)  = \omega^{h^\prime}_i(a) = \left \langle \phi_i(a)\frac{T_ih^\prime}{\|T_ih^\prime \|} , \frac{T_ih^\prime}{\|T_ih^\prime \|}  \right \rangle =  \big \langle \phi_i(a)h, h \big \rangle.
\end{equation*}
As the unit vector $h \in \mathcal{H}$, and $i \in \{ 1, 2, \cdots, k \}$ were arbitrarily chosen, we obtain $\phi = \phi_i$ for each $i \in \{ 1, 2, \cdots, k \}$. This proves that  $\phi$ is a $C^\ast$-extreme point of $\ucpapg$.

\noindent
\textbf{Proof of (2):} Suppose that $\phi$ is multiplicative and $\phi(\cdot) = \sum\limits^k_{i =1} T_i^\ast \phi_i(\cdot) T_i$ is a proper $C^\ast$-convex combination, where each $\phi_i \in \ucpapg$. Since $\phi$ is multiplicative, $\phi$ preserves adjoints, and $\phi$ is unital it follows that $\phi$ acts on $\mathcal{H}$ non-degenerately. Thus the minimal Stinespring representation of $\phi$ is given by $\big (\pi = \phi, V = \mathrm{Id}_{\mathcal{H}}, \mathcal{K} = \mathcal{H} \big )$. For each fixed $i$, we have  $T_i^\ast \phi_i T_i \leq \phi$, and hence we obtain an operator $S_i \in \phi(\mathcal{A})^\prime$ such that $0 \leq S_i \leq \mathrm{Id}_{\mathcal{H}}$ and $T_i^\ast \phi_i(\cdot) T_i = S_i\phi(\cdot)$ (see Theorem \ref{thm;arnd}). This implies that $\phi_i(\cdot) = (T^\ast_i)^{-1} S_i^{\frac{1}{2}} \phi(\cdot)  S_i^{\frac{1}{2}} (T_i)^{-1}$. Now it remains to show that $S_i^{\frac{1}{2}} (T_i)^{-1} $ is a unitary operator. First observe that  $T_i^\ast T_i = T_i^\ast \phi_i(1_\mathcal{A}) T_i = S_i\phi(1_\mathcal{A}) = S_i$. Using this, we get 
\begin{equation*}
\big (  S_i^{\frac{1}{2}} (T_i)^{-1} \big )^\ast S_i^{\frac{1}{2}} (T_i)^{-1} = (T^{-1}_i)^\ast S_i^{\frac{1}{2}} S_i^{\frac{1}{2}} (T_i)^{-1} = (T^\ast_i)^{-1} S_i(T_i)^{-1} = (T^\ast_i)^{-1} T^\ast_i T_i (T_i)^{-1} = \mathrm{Id}_\mathcal{H},
\end{equation*}
and similarly, $ S_i^{\frac{1}{2}} (T_i)^{-1} \big (  S_i^{\frac{1}{2}} (T_i)^{-1} \big )^\ast = \mathrm{Id}_\mathcal{H}$. Finally, as $i$ was chosen arbitrarily, we get that $\phi$ is a $C^\ast$-extreme point of $\ucpapg$.

\textbf{Proof of (3):} Suppose $V\mathcal{H}$ is invariant under $\big (\pi(\mathcal{A}) \cup U_\phi(G) \big )^\prime$, where $\big (\pi, V, \mathcal{K} \big )$ is the minimal Stinespring representation for $\phi$ and $U_\phi$ is the unitary representation of $G$ as given in Remark \ref{rem; unitary rep of G}. Let $\phi(\cdot) = \sum\limits^k_{i =1} T_i^\ast \phi_i(\cdot) T_i$ be a proper $C^\ast$-convex combination, where each $\phi_i \in \ucpapg$. For each fixed $i$, by following Theorem \ref{lem;rnd}, there exists a unique $S_i \in \big (\pi(\mathcal{A}) \cup U_\phi(G)\big )^\prime$ such that $0 \leq S_i \leq \mathrm{Id}_{\mathcal{K}}$ and $T_i^\ast \phi_i(\cdot) T_i = V^\ast S_i \pi(\cdot) V$. Define $R_i = V^\ast S_i^{\frac{1}{2}} V$. By using the assumption, we know that the projection $VV^\ast$ commutes with $(\pi(\mathcal{A}) \cup U_\phi(G)\big )^\prime$. Thus, we get 
\begin{equation} \label{eqn; vri = sihalf v}
VR_i = VV^\ast S_i^{\frac{1}{2}} V = S_i^{\frac{1}{2}} VV^\ast V =  S_i^{\frac{1}{2}} V.
\end{equation}
Also, by observing $T_i^\ast T_i = T_i^\ast \phi_i(1_\mathcal{A}) T_i = V^\ast S_i \pi(1_\mathcal{A}) V = V^\ast S_i V$, we obtain
\begin{equation*}
R_i = (R_i^2)^\frac{1}{2} = \big (V^\ast S_i^{\frac{1}{2}} V V^\ast S_i^{\frac{1}{2}} V \big )^\frac{1}{2}  = \big (V^\ast S_i^{\frac{1}{2}} S_i^{\frac{1}{2}} V V^\ast  V \big )^\frac{1}{2} = (V^\ast S_i V)^\frac{1}{2} =  (T_i^\ast T_i)^\frac{1}{2}.
\end{equation*}
Since $T_i$ is invertible, it follows that $R_i$ is invertible. By following Equation \eqref{eqn; vri = sihalf v}, we get
\begin{equation*}
T_i^\ast \phi_i(\cdot) T_i = V^\ast S_i^\frac{1}{2} \pi(\cdot) S_i^\frac{1}{2} V = R_i^\ast V^\ast \pi(\cdot) V R_i = R^\ast_i \phi(\cdot) R_i,
\end{equation*}
and hence
\begin{equation*}
\phi_i(\cdot) = (R_i T_i^{-1})^\ast \phi(\cdot) (R_i T_i^{-1}), \; \; \; \; \;\; \phi(\cdot) = (T_i R^{-1}_i)^\ast \phi_i(\cdot) (T_i R^{-1}_i).
\end{equation*}
Now, it remains to show that $R_i T_i^{-1}$ is unitary. We have
\begin{align*}
\mathrm{Id}_\mathcal{H} &= \phi_i (1_\mathcal{A}) = (R_i T_i^{-1})^\ast \phi(1_\mathcal{A})  (R_i T_i^{-1}) = (R_i T_i^{-1})^\ast(R_i T_i^{-1}); \\
\mathrm{Id}_\mathcal{H} &= \phi (1_\mathcal{A}) = (T_i R^{-1}_i)^\ast \phi_i (1_\mathcal{A}) (T_i R^{-1}_i) = (T_i R^{-1}_i)^\ast (T_i R^{-1}_i) \implies \mathrm{Id}_\mathcal{H} = (R_i T^{-1}_i)(R_i T^{-1}_i)^\ast.
\end{align*}
This shows that $R_i T_i^{-1}$ is unitary. Since $i$ was arbitrarily chosen, we get the result.

\textbf{Proof of (4):} Suppose that $\phi$ is pure in $\cpa$. By following \cite[Corollary 1.4.3]{Arveson1}, we get that $\pi$ is an irreducible representation of $\mathcal{A}$ on $\mathcal{K}$. Thus $(\pi(\mathcal{A}))^\prime \cong \mathbb{C}$, and hence $(\pi(\mathcal{A}) \cup U_\phi(G)\big )^\prime \cong \mathbb{C}$. Thus the result follows from (3).
\end{proof}



\begin{definition} \cite{FM}
Let $\phi_i \in \text{UCP} \big(\mathcal{A}, \mathcal{B}  (\mathcal{H}_i) \big)$ be a pure map for each $1 \leq i \leq n$ with the minimal Stinespring representation for $\phi_i$ be given by $\big (\pi_i, V_i, \mathcal{K}_i \big )$. Then $\phi_1, \phi_2, \cdots, \phi_n$ are said to be disjoint, if the irreducible representations $\pi_1, \pi_2, \cdots, \pi_n$ are mutually nonequivalent.
\end{definition}

\begin{proposition}
Let $\phi_i \in \text{UCP}^{G_\tau} \big(\mathcal{A}, \mathcal{B}  (\mathcal{H}_i ) \big)$ be pure for each $1 \leq i \leq n$. If $\phi_1, \phi_2, \cdots, \phi_n$ are disjoint, then $\phi_1 \oplus \phi_2 \oplus \cdots \oplus \phi_n$ is a $C^\ast$-extreme point of $\ucpapg$, where $\mathcal{H} = \mathcal{H}_1 \oplus \mathcal{H}_2 \oplus \cdots \oplus \mathcal{H}_n$.
\end{proposition}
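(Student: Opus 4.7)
The plan is to verify condition (3) of Proposition \ref{prop; sufficient conditions} for $\phi := \phi_1 \oplus \cdots \oplus \phi_n$. First I would note that $\phi \in \ucpapg$, since unital complete positivity and $G$-invariance are both preserved under direct sums. Next, I would write down the natural Stinespring triple $(\pi, V, \mathcal{K})$ with $\mathcal{K} := \mathcal{K}_1 \oplus \cdots \oplus \mathcal{K}_n$, $\pi := \pi_1 \oplus \cdots \oplus \pi_n$, and $V := V_1 \oplus \cdots \oplus V_n$. Minimality of $(\pi, V, \mathcal{K})$ is inherited from minimality of each $(\pi_i, V_i, \mathcal{K}_i)$: restricting to vectors $h \in \mathcal{H}$ supported in the $j$-th summand shows that $\overline{\mathrm{span}}\{\pi(a)Vh\}$ contains each summand $\mathcal{K}_j$, hence all of $\mathcal{K}$.

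Second, I would compute the commutant $\pi(\mathcal{A})^\prime$. Purity of $\phi_i$ forces $\pi_i$ to be irreducible (via \cite[Corollary 1.4.3]{Arveson1}, as used in the proof of Proposition \ref{prop; sufficient conditions}(4)), while disjointness supplies mutual inequivalence. A blockwise application of Schur's lemma therefore yields
\begin{equation*}
\pi(\mathcal{A})^\prime = \big\{ c_1 \mathrm{Id}_{\mathcal{K}_1} \oplus \cdots \oplus c_n \mathrm{Id}_{\mathcal{K}_n} : c_1, \ldots, c_n \in \mathbb{C} \big\},
\end{equation*}
the off-diagonal blocks vanishing by inequivalence and the diagonal blocks being forced to be scalar by irreducibility. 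Consequently $\bigl(\pi(\mathcal{A}) \cup U_\phi(G)\bigr)^\prime$, being a subalgebra of $\pi(\mathcal{A})^\prime$, also consists of block-diagonal scalar operators with respect to the decomposition $\mathcal{K} = \bigoplus_i \mathcal{K}_i$.

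Finally, since $V\mathcal{H} = V_1\mathcal{H}_1 \oplus \cdots \oplus V_n\mathcal{H}_n$ respects the same decomposition, it is manifestly invariant under every block-diagonal scalar operator on $\mathcal{K}$, and in particular under $\bigl(\pi(\mathcal{A}) \cup U_\phi(G)\bigr)^\prime$. Condition (3) of Proposition \ref{prop; sufficient conditions} then applies and shows that $\phi$ is a $C^\ast$-extreme point of $\ucpapg$.

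The only delicate step is the commutant computation: one must carefully argue that off-diagonal intertwiners vanish using disjointness and that the diagonal blocks are scalars using irreducibility. This is exactly where the two hypotheses of the proposition enter; once the commutant is pinned down, the invariance of $V\mathcal{H}$ is transparent and the conclusion follows immediately from the previously established sufficient condition.
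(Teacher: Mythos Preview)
Your proof is correct and follows essentially the same route as the paper: both arguments build the direct-sum Stinespring triple, identify $\pi(\mathcal{A})^\prime$ with $\mathbb{C}^n$ (the paper cites \cite[Lemma~3.3]{BhKu} for this, while you spell out the Schur's lemma computation directly), and then invoke condition~(3) of Proposition~\ref{prop; sufficient conditions}. Your version is slightly more self-contained in that you explicitly verify minimality of $(\pi,V,\mathcal{K})$ and the invariance of $V\mathcal{H}$ rather than citing \cite{BhKu}, but the strategy is identical.
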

\begin{proof}
Let $\phi := \phi_1 \oplus \phi_2 \oplus \cdots \oplus \phi_n : \mathcal{A} \rightarrow \mathcal{B}(\mathcal{H} = \mathcal{H}_1 \oplus \mathcal{H}_2 \oplus \cdots \oplus \mathcal{H}_n)$. Then $\phi \in \ucpapg$.  Let $\big (\pi_i, V_i, \mathcal{K}_i \big )$ be the minimal Stinespring representation for $\phi_i$ and let $P_i : \mathcal{H} \rightarrow \mathcal{H}_i$ be the projection for each $i \in \{ 1, 2, \cdots, n \}$. Define 
\begin{equation*}
\mathcal{K} := \mathcal{K}_1 \oplus \mathcal{K}_2 \oplus \cdots \oplus \mathcal{K}_n, \; \; \; \; \;   V : \mathcal{H} \rightarrow \mathcal{K} \; \; \; \text{by} \; \; \; Vh := V_1P_1h \oplus V_2P_2h \oplus \cdots \oplus V_nP_nh,
\end{equation*}
fo all $h \in \mathcal{H}$, and
\begin{equation*}
\pi : \mathcal{A} \rightarrow \mathcal{B}(\mathcal{K}), \;  \; \; \; \text{by} \; \; \; \pi := \pi_1 \oplus \pi_2 \oplus \cdots \oplus \pi_n. 
\end{equation*}
Then the minimal Stinespring's representation of $\phi$ is given by $\big (\pi, V, \mathcal{K} \big )$ (see \cite[Remark 3.1]{BhKu}). Moreover, by following \cite[Lemma 3.3]{BhKu}, we obtain that $\pi(\mathcal{A})^\prime \cong \mathbb{C}^n$. Then  the result follows from (3) of Proposition \ref{prop; sufficient conditions}.
\end{proof}

\section{Characterization of $C^\ast$-extreme points} \label{sec; characterization}

In this section, we provide a characterization of the $C^\ast$-extreme points of $\ucpapg$. We present some criteria that help in identifying these $C^\ast$-extreme points. We begin by showing that, in order to verify whether a given element of $\ucpapg$ is $C^\ast$-extreme it is enough to consider the $C^\ast$-convex combinations involving only two elements of $\ucpapg$.

\begin{theorem}  \label{thm; equivalent condition 3}
Let $\phi \in \ucpapg$ with the minimal Stinespring representation for $\phi$ be given by $\big (\pi, V, \mathcal{K} \big )$. Then the following statements are equivalent
\begin{enumerate}
\item $\phi$ is a $C^\ast$-extreme point of $\ucpapg$;
\item if  $\phi(\cdot) = \sum\limits^2_{i =1} T^\ast_i \phi_i(\cdot) T_i$, where each $\phi_i \in \ucpapg$ and $T_i \in \mathcal{B}(\mathcal{H})$ with $T^{-1}_i$ exists for all $i = 1, 2$ and $\sum\limits^2_{i =1} T^\ast_i T_i = \mathrm{Id}_\mathcal{H}$, then $\phi_i$ is unitarily equivalent to $\phi$ for $i = 1, 2$.
\end{enumerate}
\end{theorem}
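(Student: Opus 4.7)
The implication $(1) \Rightarrow (2)$ is immediate: it is simply the definition of a $C^\ast$-extreme point restricted to $C^\ast$-convex combinations of length two. The substantive direction is $(2) \Rightarrow (1)$, and the plan is to reduce an arbitrary proper $C^\ast$-convex combination of length $n$ to a proper $C^\ast$-convex combination of length two by grouping terms.

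Suppose $\phi(\cdot) = \sum_{i=1}^n T_i^\ast \phi_i(\cdot) T_i$ is a proper $C^\ast$-convex combination with each $\phi_i \in \ucpapg$ and each $T_i$ invertible. Fix any index $j$; I will show $\phi_j \sim \phi$. Without loss of generality take $j = 1$. Set
\begin{equation*}
S := \sum_{i=2}^n T_i^\ast T_i = \mathrm{Id}_\mathcal{H} - T_1^\ast T_1.
\end{equation*}
The first key observation is that $S$ is invertible: since $T_2$ is invertible, $T_2^\ast T_2 > 0$, and hence $S \geq T_2^\ast T_2 > 0$. In particular $S^{1/2}$ exists and is invertible. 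Now define
\begin{equation*}
\psi(\cdot) := S^{-1/2} \Bigl( \sum_{i=2}^n T_i^\ast \phi_i(\cdot) T_i \Bigr) S^{-1/2}.
\end{equation*}
One checks routinely that $\psi$ is completely positive, that $\psi(1_\mathcal{A}) = S^{-1/2} \bigl(\sum_{i \geq 2} T_i^\ast T_i\bigr) S^{-1/2} = \mathrm{Id}_\mathcal{H}$, and that $\psi \circ \tau_g = \psi$ for each $g \in G$ because every $\phi_i$ is $G$-invariant. Hence $\psi \in \ucpapg$. By construction
\begin{equation*}
\phi(\cdot) = T_1^\ast \phi_1(\cdot) T_1 + (S^{1/2})^\ast \psi(\cdot) S^{1/2},
\end{equation*}
with $T_1^\ast T_1 + (S^{1/2})^\ast S^{1/2} = \mathrm{Id}_\mathcal{H}$ and both $T_1$ and $S^{1/2}$ invertible. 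This is a proper two-term $C^\ast$-convex combination, so assumption (2) yields $\phi_1 \sim \phi$ (and incidentally $\psi \sim \phi$). Repeating the grouping argument with the roles of the indices permuted shows $\phi_j \sim \phi$ for every $j$, which completes the proof.

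The step I expect to be the main point requiring care is the invertibility of $S = \mathrm{Id}_\mathcal{H} - T_1^\ast T_1$; this depends crucially on the properness of the original combination (all $T_i$ invertible with $n \geq 2$), which forces a strictly positive lower bound on $S$. Once this is in hand, the verification that $\psi \in \ucpapg$ is routine and the rest of the argument is bookkeeping.
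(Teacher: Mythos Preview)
Your proof is correct and follows essentially the same strategy as the paper: reduce an $n$-term proper $C^\ast$-convex combination to a two-term one by grouping terms, then apply hypothesis~(2). The only difference is organizational---the paper proceeds by induction on $n$ (peeling off the last term and recursing on the grouped map $\psi$), whereas you isolate each $\phi_j$ directly in one step; your direct version is marginally cleaner since it sidesteps the need to transfer the induction hypothesis from $\phi$ to $\psi$ via the established unitary equivalence $\psi \sim \phi$.
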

\begin{proof}
One may see that the proof of (1) $\implies$ (2) follows from the definition of $C^\ast$-extreme point of $\ucpapg$. Thus, it only remains to prove (2) $\implies$ (1). Now assume that $\phi(\cdot) = \sum\limits^n_{i =1} T^\ast_i \phi_i(\cdot) T_i$, where each $\phi_i \in \ucpapg$ and $T_i \in \mathcal{B}(\mathcal{H})$ with $T^{-1}_i$ exists for all $i \in \{ 1, 2, \cdots, n \}$ and $\sum\limits^n_{i =1} T^\ast_i T_i = \mathrm{Id}_\mathcal{H}$. From the assumption, we know that if $n = 2$, then each $\phi_i$ is unitarily equivalent to $\phi$. Assume that the result is true for $m \geq 2$ and suppose $\phi(\cdot) = \sum\limits^{m+1}_{i =1} T^\ast_i \phi_i(\cdot) T_i$, where each $\phi_i \in \ucpapg$ and $T_i \in \mathcal{B}(\mathcal{H})$ with $T^{-1}_i$ exists for all $i \in \{ 1, 2, \cdots, m+1 \}$ and $\sum\limits^{m+1}_{i =1} T^\ast_i T_i = \mathrm{Id}_\mathcal{H}$. Then we have
\begin{equation*}
\phi(\cdot) = \sum^{m+1}_{i =1} T^\ast_i \phi_i(\cdot) T_i = \sum^{m}_{i =1} T^\ast_i \phi_i(\cdot) T_i + T^\ast_{m+1} \phi_{m+1}(\cdot) T_{m+1}.
\end{equation*}
Let $S$ be the positive square root of $\sum\limits^{m}_{i =1} T^\ast_i T_i$. Since each $T_i$ is invertible, $S$ is invertible and we obtain $S^\ast S  + T^\ast_{m+1} T_{m+1} = \mathrm{Id}_\mathcal{H}$. Define $\psi(\cdot) := \sum\limits^{m}_{i =1} S^{-1\ast} T^\ast_i \phi_i(\cdot) T_i S^{-1}$. Then $\psi \in \ucpapg$ and we have 
\begin{equation*}
\phi(\cdot) = S^\ast \psi(\cdot) S + T^\ast_{m+1} \phi_{m+1}(\cdot) T_{m+1}.
\end{equation*}
Hence by assumption, we get $\psi$ and $\phi_{m+ 1}$ are unitarily equivalent to $\phi$. Now by using the fact that $S$ is the positive square root of $\sum\limits^{m}_{i =1} T^\ast_i T_i$ we obtain that $\psi(\cdot) = \sum\limits^{m}_{i =1} S^{-1\ast} T^\ast_i \phi_i(\cdot) T_i S^{-1}$ is a proper $C^\ast$-convex combination of $\psi$. By using induction hypothesis, we conclude that each $\phi_i$ is unitarily equivalent to $\psi$ and hence each $\phi_i$ is unitarily equivalent to $\phi$. This completes the proof.
\end{proof}

Next, we recall the definition of a $\phi$-invertible operator in \cite{FZ}. Motivated by the results presented in \cite{FZ}, we then prove Theorem \ref{thm; equivalent condition 1}.

\begin{definition} \cite{FZ} \label{def; phi invertible}
Let  $\phi \in \ucpapg$ with the minimal Stinespring representation for $\phi$ be given by $\big (\pi, V, \mathcal{K} \big )$. Then a positive operator $S \in \pi(\mathcal{A})^\prime$ is said to be $\phi$-invertible, if $V^\ast S V$ is invertible in $\mathcal{B}(\mathcal{H})$.
\end{definition}

\begin{theorem} \label{thm; equivalent condition 1}
Let $\mathcal{A}$ be a unital $C^\ast$-algebra and let $\mathcal{H}$ be a Hilbert space. Suppose $\phi$ belongs to  $\ucpapg$. Let $\big (\pi, V, \mathcal{K} \big )$  be the minimal Stinespring representation for $\phi$ and $U_\phi$ be the representation of $G$ on $\mathcal{K}$ as given in Remark \ref{rem; unitary rep of G}. Then $\phi$ is a $C^\ast$-extreme point of $\ucpapg$ if and only if for every $\phi$-invertible positive $S \in \big (\pi(\mathcal{A}) \cup U_\phi(G) \big )^\prime$, there exists 
\begin{enumerate}
\item a family $\big \{ W_g \in \mathcal{B}(\mathcal{K}) \big \}_{g \in G}$ of partial isometries satisfying $W_g \pi(a) = \pi(\tau_g (a)) W_g$, for all $g \in G$ and $a \in \mathcal{A}$, with $\text{Range}(W^\ast_g) = \text{Range}(W^\ast_g W_g) = \overline{\text{Range}(S^{\frac{1}{2}})}$, for all $g \in G$;
\item a family $\big \{ Z_g \in \mathcal{B}(\mathcal{H}) \big \}_{g \in G}$ of invertible operators such that $W_g S^{\frac{1}{2}} V = V Z_g$.
\end{enumerate}
\end{theorem}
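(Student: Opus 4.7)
The plan is to use the $G$-equivariant Radon--Nikodym theorem (Theorem~\ref{lem;rnd}) as a bridge between $\phi$-invertible positive operators in $\big(\pi(\mathcal{A}) \cup U_\phi(G)\big)'$ and proper $C^\ast$-convex decompositions of $\phi$, then extract the intertwining data from $C^\ast$-extremity in the forward direction and from the families $\{W_g\},\{Z_g\}$ in the converse---where Theorem~\ref{thm; equivalent condition 3} lets me restrict attention to two-summand decompositions.

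For the forward direction, given positive, $\phi$-invertible $S \in \big(\pi(\mathcal{A}) \cup U_\phi(G)\big)'$, I first rescale to $\epsilon S$ for small $\epsilon > 0$ so that $0 \le \epsilon S \le \mathrm{Id}_\mathcal{K}$ and both $V^{\ast}(\epsilon S)V$ and $\mathrm{Id}_\mathcal{H} - V^{\ast}(\epsilon S)V$ are invertible; the conclusion for $S$ then follows from that for $\epsilon S$, since $\overline{\text{Range}((\epsilon S)^{1/2})} = \overline{\text{Range}(S^{1/2})}$ and only $Z_g$ picks up a scalar factor. Set $S_1 := \epsilon S$, $S_2 := \mathrm{Id}_\mathcal{K} - \epsilon S$, $\phi_i := V^{\ast}\pi(\cdot) S_i V$, $T_i := (V^{\ast} S_i V)^{1/2}$, and $\psi_i := T_i^{-1}\phi_i T_i^{-1}$. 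Each $\psi_i$ lies in $\ucpapg$, and $\phi = T_1^{\ast}\psi_1 T_1 + T_2^{\ast}\psi_2 T_2$ is a proper $C^\ast$-convex combination, so $C^\ast$-extremity yields a unitary $U_1$ with $\psi_1 = U_1^{\ast}\phi(\cdot) U_1$. The key structural step is to recognize two minimal Stinespring representations of $\psi_1$: $(\pi, V U_1, \mathcal{K})$ and $(\pi|_{\mathcal{K}_0}, S^{1/2} V T_1^{-1}, \mathcal{K}_0)$, where $\mathcal{K}_0 := \overline{\text{Range}(S^{1/2})}$ is $\pi$-invariant because $S^{1/2} \in \pi(\mathcal{A})'$. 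Proposition~\ref{prop;umsd} then delivers a unitary $\mathcal{U}:\mathcal{K}\to\mathcal{K}_0$ with $\mathcal{U}(VU_1) = S^{1/2}VT_1^{-1}$ and $\mathcal{U}\pi(a)\mathcal{U}^{\ast} = \pi(a)|_{\mathcal{K}_0}$; viewed as an isometry in $\mathcal{B}(\mathcal{K})$, $\mathcal{U}$ commutes with $\pi(\mathcal{A})$ and $\mathcal{U}\mathcal{U}^{\ast} = P_{\mathcal{K}_0}$. I then set
\[ W_g := U_\phi(g)\,\mathcal{U}^{\ast}, \qquad Z_g := U_1 T_1 \qquad (g \in G), \]
and the four required properties reduce to short computations using $W_g^{\ast}W_g = P_{\mathcal{K}_0}$, $\mathcal{U}\in\pi(\mathcal{A})'$, $U_\phi(g)\pi(a)U_\phi(g)^{\ast} = \pi(\tau_g(a))$, $U_\phi(g)V = V$, and $\mathcal{U}^{\ast}S^{1/2}V = VU_1T_1$ (read off from $\mathcal{U}(VU_1)=S^{1/2}VT_1^{-1}$).

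For the backward direction, applying Theorem~\ref{lem;rnd} to a two-summand proper combination $\phi = T_1^{\ast}\phi_1 T_1 + T_2^{\ast}\phi_2 T_2$ produces $S_i \in \big(\pi(\mathcal{A})\cup U_\phi(G)\big)'$ with $T_i^{\ast}\phi_i(\cdot) T_i = V^{\ast}\pi(\cdot) S_i V$; since $V^{\ast} S_i V = T_i^{\ast} T_i$ is invertible, each $S_i$ is $\phi$-invertible. Applying the hypothesis to $S_i$ and writing $W := W_e^{(i)}$, $Z := Z_e^{(i)}$, the case $g = e$ of the intertwining gives $W \in \pi(\mathcal{A})'$; together with $W^{\ast}W S_i^{1/2} = S_i^{1/2}$ and $W S_i^{1/2} V = V Z$, a short computation yields
\[ T_i^{\ast}\phi_i(a) T_i \;=\; V^{\ast} S_i^{1/2} W^{\ast} \pi(a) W S_i^{1/2} V \;=\; Z^{\ast} \phi(a)\, Z. \]
The identity $Z^{\ast} Z = V^{\ast} S_i V = T_i^{\ast} T_i$ combined with invertibility of $Z$ forces $U := Z T_i^{-1}$ to be unitary, so $\phi_i = U^{\ast}\phi(\cdot) U$, i.e., $\phi_i \sim \phi$.

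The main obstacle I expect is the bookkeeping in the forward direction: correctly identifying the alternative minimal Stinespring representation of $\psi_1$ on $\mathcal{K}_0$, verifying that the uniqueness unitary $\mathcal{U}$ actually commutes with $\pi(\mathcal{A})$ once pushed into $\mathcal{B}(\mathcal{K})$ (which works precisely because $\mathcal{K}_0$ is $\pi$-invariant), and threading the $G$-equivariance through $U_\phi$---which explains why the family $\{W_g\}$ is indexed by $G$ at all, even though only the $g = e$ datum is used in the converse.
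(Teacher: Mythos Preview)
Your proposal is correct and follows the same overall strategy as the paper---rescale, build a proper two-term $C^\ast$-combination via Theorem~\ref{lem;rnd}, then invoke Stinespring uniqueness (Proposition~\ref{prop;umsd})---but the forward construction is organised differently. For each fixed $g\in G$ the paper compares the two minimal dilations $(\pi\circ\tau_g,V,\mathcal{K})$ and $(\pi,\,S_1^{1/2}VT_1^{-1}U_{1,g},\,\mathcal{K}_{1,g})$ of $\phi$, obtaining a $g$-dependent unitary $w_g:\mathcal{K}_{1,g}\to\mathcal{K}$ whose zero-extension is $W_g$; the intertwining relation $W_g\pi=\pi\circ\tau_g\,W_g$ thus comes directly from Proposition~\ref{prop;umsd}, and $Z_g$ inherits a genuine $g$-dependence. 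You instead run Stinespring uniqueness \emph{once} (at $g=e$), extract a single isometry $\mathcal{U}\in\pi(\mathcal{A})'$, and then manufacture the $g$-dependence externally by setting $W_g:=U_\phi(g)\,\mathcal{U}^\ast$; this buys you the extra information that $Z_g$ may be chosen independent of $g$ and that $W_g$ always factors through the representation $U_\phi$ already supplied by Remark~\ref{rem; unitary rep of G}. In the converse the paper carries a general $g$ through the computation and absorbs $\tau_g$ via $\phi(\tau_g(a))=\phi(a)$, whereas you note that $g=e$ already suffices (so $W_e\in\pi(\mathcal{A})'$ immediately); both work, and your remark that only the identity datum is needed for sufficiency is a mild sharpening. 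One notational slip: the isometry in your second minimal dilation of $\psi_1$ should be $S_1^{1/2}VT_1^{-1}$ rather than $S^{1/2}VT_1^{-1}$ (only the former satisfies $V_1^\ast V_1=\mathrm{Id}_\mathcal{H}$), but this is precisely the rescaling you already flagged and it does not affect the argument.
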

\begin{proof}
First, assume that $\phi$ is a $C^\ast$-extreme point of $\ucpapg$ and $S$ is a $\phi$-invertible positive operator in  $\big (\pi(\mathcal{A}) \cup U_\phi(G) \big )^\prime$. Then there exists a real number $\alpha \in (0, 1)$ such that $S_1 := \alpha S$ and $S_2 :=\mathrm{Id}_\mathcal{K} - \alpha S$ are $\phi$-invertible positive operators and moreover, $S_1, S_2 \in \big (\pi(\mathcal{A}) \cup U_\phi(G) \big )^\prime$. Thus for all $g \in G$ and $a \in \mathcal{A}$, we have
\begin{align*}
V^\ast \pi(\tau_g (a)) V = \phi(\tau_g (a)) =  \phi(a) &= V^\ast \pi(a) V \\
&= V^\ast \pi(a) (S_1 + S_2) V \\
&= V^\ast S_1^{\frac{1}{2}}\pi(a) S_1^{\frac{1}{2}} V + V^\ast S_2^{\frac{1}{2}} \pi(a) S_2^{\frac{1}{2}} V. 
\end{align*}
Define $R_i = V^\ast S_i V$ and $T_i = R_i^{\frac{1}{2}}$ for $i \in \{1, 2\}$. By following Definition \ref{def; phi invertible}, we get that $T_i$ is an invertible operator for each $i$. Using this, for each fixed $g \in G$ and $a \in \mathcal{A}$, we see that
\begin{align*}
V^\ast \pi(\tau_g (a)) V &= \phi(\tau_g (a)) \\
&=  \phi(a) \\
&= T^\ast_1 T^{\ast -1}_1 \big ( V^\ast  S^{\frac{1}{2}}_1 \pi(a) S^{\frac{1}{2}}_1 V \big ) T^{-1}_1  T_1 + T^\ast_2 T^{\ast -1}_2 \big ( V^\ast  S^{\frac{1}{2}}_2 \pi(a) S^{\frac{1}{2}}_2 V \big )  T^{-1}_2 T_2  \\
&= T^\ast_1 \big (  T^{\ast -1}_1 V^\ast  S^{\frac{1}{2}}_1 \pi(a) S^{\frac{1}{2}}_1 V T^{-1}_1 \big ) T_1 + T^\ast_2 \big (  T^{\ast -1}_2 V^\ast  S^{\frac{1}{2}}_2 \pi(a) S^{\frac{1}{2}}_2 V T^{-1}_2 \big ) T_2
\end{align*}
Let us define $\phi_1(a) := T^{\ast -1}_1 V^\ast  S^{\frac{1}{2}}_1 \pi(a) S^{\frac{1}{2}}_1 V T^{-1}_1$ and $\phi_2(a) := T^{\ast -1}_2 V^\ast  S^{\frac{1}{2}}_2 \pi(a) S^{\frac{1}{2}}_2 V T^{-1}_2$ for all $a \in \mathcal{A}$. Then it is clear that 
\begin{equation*}
\phi_1(a) = T^{\ast -1}_1 V^\ast  S^{\frac{1}{2}}_1 \pi(a) S^{\frac{1}{2}}_1 V T^{-1}_1 = T^{\ast -1}_1 V^\ast  S^{\frac{1}{2}}_1 \pi(\tau_g (a)) S^{\frac{1}{2}}_1 V T^{-1}_1 =     \phi_1(\tau_g (a)),   
\end{equation*}
for all $g \in G$ and for all $a \in \mathcal{A}$. This shows that $\phi_1 \in \ucpapg$. Similarly, $\phi_2 \in \ucpapg$. Thus for a fixed $g \in G$, we have
\begin{equation} \label{eqn; phi = phi 1 + phi 2}
V^\ast \pi(\tau_g (a)) V = \phi(\tau_g (a)) =  \phi(a) =  T^\ast_1  \phi_1(a) T_1 + T^\ast_2 \phi_2(a) T_2, \; \; \; \text{for all}   \; \; \;  a \in \mathcal{A}.
\end{equation}
Now observe that
\begin{equation*}
T_1^\ast T_1 + T_2^\ast T_2 = R_1 + R_2 = V^\ast ( S_1 + S_2 ) V = \mathrm{Id}_\mathcal{H}.
\end{equation*}
This shows that the $C^\ast$-convex combination given in Equation \eqref{eqn; phi = phi 1 + phi 2} is a proper $C^\ast$-convex combination. Since $\phi$ is a $C^\ast$-extreme point of $\ucpapg$, for a fixed $g \in G$, there exists a unitary $U_{1, g} \in \mathcal{B}(\mathcal{H})$ such that 
\begin{equation*}
 V^\ast \pi(\tau_g (a)) V  =  U_{1, g}^\ast \phi_1(a) U_{1, g} =  (S^{\frac{1}{2}}_1 V T^{-1}_1 U_{1, g})^\ast \pi(a) S^{\frac{1}{2}}_1 V T^{-1}_1 U_{1, g} \; \; \; \text{for all}  \; \;  a \in \mathcal{A}.
\end{equation*}
Let $\mathcal{K}_{1, g}$ be the Hilbert subspace of $\mathcal{K}$, where 
\begin{equation*}
\mathcal{K}_{1, g} := \overline{\text{span}} \left \{  \pi(a) S^{\frac{1}{2}}_1 V T^{-1}_1 U_{1, g}h \; \; : \; \; a \in \mathcal{A}, \; \; h \in \mathcal{H} \right \} \subseteq \mathcal{K}.
\end{equation*}
Then by using the facts that $S_1 = \alpha S \in \big (\pi(\mathcal{A}) \cup U_\phi(G) \big )^\prime$ and the operators $T^{-1}_1$ and $U_{1, g}$ are invertible operators in $\mathcal{B}(\mathcal{H})$, we get
\begin{align*}
\mathcal{K}_{1, g} &= \overline{\text{span}} \left \{  \pi(a) S^{\frac{1}{2}}_1 V T^{-1}_1 U_{1, g}h \; \; : \; \; a \in \mathcal{A}, \; \; h \in \mathcal{H} \right \} \\ 
&= \overline{\text{span}} \left \{  \alpha^{\frac{1}{2}} S^{\frac{1}{2}} \pi(a)  V T^{-1}_1 U_{1, g}h \; \; : \; \; a \in \mathcal{A}, \; \; h \in \mathcal{H} \right \} \\
&= \overline{\text{span}} \left \{  S^{\frac{1}{2}} \pi(a)  V h \; \; : \; \; a \in \mathcal{A}, \; \; h \in \mathcal{H} \right \}  \\
&= \overline{\text{span}} \left \{  S^{\frac{1}{2}} k \; \; : \; \; k \in \mathcal{K} \right \},
\end{align*}
where the last step is achieved by knowing that the minimal Stinespring representation for $\phi$ is given by $\big (\pi, V, \mathcal{K} \big )$. Thus 
\begin{equation} \label{eqn; K 1 g}
\mathcal{K}_{1, g} = \overline{\text{Range}(S^{\frac{1}{2}})}
\end{equation}
Thus for a fixed  $g \in G$, we have the following two minimal Stinespring representations for $\phi$ 
\begin{equation*}
\big (\pi, S^{\frac{1}{2}}_1 V T^{-1}_1 U_{1, g}, \mathcal{K}_{1, g} \big ) \; \; \; \text{and} \; \; \; \big (\pi \circ \tau_g, V, \mathcal{K} \big ).
\end{equation*}
Since the minimal Stinespring representation is unique, by following Proposition \ref{prop;umsd}, we get a unitary $w_g : \mathcal{K}_{1, g} \rightarrow \mathcal{K}$ such that 
\begin{equation*}
w_g (S^{\frac{1}{2}}_1 V T^{-1}_1 U_{1, g}) = V \; \; \;  \text{and}  \; \; \; w_g \pi(a) \big|_{\mathcal{K}_{1, g}} = \pi(\tau_g (a)) w_g.
\end{equation*}
Now extend $w_g$ to the Hilbert space $\mathcal{K}$ by defining it to be 0 on the complement of $\mathcal{K}_{1, g}$, and call this map $W_g \in \mathcal{B}(\mathcal{K})$. Then $W_g$ is a partial isometry in $\mathcal{B}(\mathcal{K})$. Also note that 
\begin{equation} \label{eqn; Wg = V and Wgpi = piWg}
W_g (S^{\frac{1}{2}}_1 V T^{-1}_1 U_{1, g}) = V \; \; \;  \text{and}  \; \; \; W_g \pi(a) = \pi(\tau_g (a)) W_g.
\end{equation}
By following Equation \eqref{eqn; K 1 g}, and the definition of $W_g$, we have $\text{Range}(W^\ast_g) = \text{Range}(W^\ast_g W_g) = \overline{\text{Range}(S^{\frac{1}{2}})}$. It follows from Equation \eqref{eqn; Wg = V and Wgpi = piWg} that $W_g S^{\frac{1}{2}}_1 V = V U_{1, g}^\ast T_1$, where $U_{1, g}^\ast T_1 \in \mathcal{B}(\mathcal{H})$ is invertible. Now we use the fact that $S_1 = \alpha S$ and conclude that $W_g S^{\frac{1}{2}} V = V \frac{U_{1, g}^\ast T_1}{\alpha}$. Let us define $Z_g := \frac{U_{1, g}^\ast T_1}{\alpha}$. Then we get $W_g S^{\frac{1}{2}} V = V Z_g$.  Finally, as the $\phi$-invertible positive operator $S \in \big (\pi(\mathcal{A}) \cup U_\phi(G) \big )^\prime$ and $g \in G$ were arbitrarily chosen, we get the necessary condition. 

Conversely, now assume that for every $\phi$-invertible positive $S \in \big (\pi(\mathcal{A}) \cup U_\phi(G) \big )^\prime$, there exists a family $\big \{ W_g \in \mathcal{B}(\mathcal{K}) \big \}_{g \in G}$ of partial isometries satisfying $W_g \pi(a) = \pi(\tau_g (a)) W_g$, for all $g \in G$ and $a \in \mathcal{A}$, with $\text{Range}(W^\ast_g) = \text{Range}(W^\ast_g W_g) = \overline{\text{Range}(S^{\frac{1}{2}})}$, for all $g \in G$. Also, there exists a family $\big \{ Z_g \in \mathcal{B}(\mathcal{H}) \big \}_{g \in G}$ of invertible operators such that $W_g S^{\frac{1}{2}} V = V Z_g$. Now we make use of Theorem \ref{thm; equivalent condition 3}. Let $\phi(a) = \sum\limits^2_{i=1} T^\ast_i \phi_i(a) T_i$, be a proper $C^\ast$-convex combination of elements of $\ucpapg$. Fix $i \in \{ 1, 2 \}$ and observe that $T^\ast_i \phi_i T_i \in \ucpapg$ and $T^\ast_i \phi_i T_i \leq \phi$. It follows from Lemma \ref{lem;rnd} that there exists a unique $S_i \in \big (\pi(\mathcal{A}) \cup U_\phi(G) \big )^\prime$ such that 
\begin{equation} \label{eqn; for i tistar phi i ti}
T^\ast_i \phi_i(a) T_i = V^\ast S_i \pi(a) V = V^\ast S^\frac{1}{2}_i \pi(a) S^\frac{1}{2}_i V, \; \; \; \; \text{for all} \; \; a \in \mathcal{A}.
\end{equation}
Using the above equation, we get
\begin{equation*}
T^\ast_i T_i = T^\ast_i \phi_i(1_\mathcal{A}) T_i = V^\ast S_i \pi(1_\mathcal{A}) V = V^\ast S_i V.
\end{equation*}
As $T^\ast_i T_i$ is invertible, we get that $V^\ast S_i V$ is also invertible. Thus from the assumption,  there exists a family $\big \{ W_g \in \mathcal{B}(\mathcal{K}) \big \}_{g \in G}$ of  partial isometries  satisfying the property that $W_g \pi(a) = \pi(\tau_g (a)) W_g$ for all $g \in G$ and $a \in \mathcal{A}$, with $\text{Range}(W^\ast_g) = \text{Range}(W^\ast_g W_g) = \overline{\text{Range}(S_i^{\frac{1}{2}})}$ for all $g \in G$. Also there exists and a family $\big \{ Z_g \in \mathcal{B}(\mathcal{H}) \big \}_{g \in G}$ of invertible operators such that $W_g S_i^{\frac{1}{2}} V = V Z_g$. 
Since for all $g \in G$, we have $\text{Range}(W^\ast_g W_g) = \overline{\text{Range}(S_i^{\frac{1}{2}})}$ we obtain $W^\ast_g W_g S^\frac{1}{2}_i = S^\frac{1}{2}_i$. Thus for all $a \in \mathcal{A}$, and $g \in G$ (by using Equation \eqref{eqn; for i tistar phi i ti}), we get
\begin{align*}
\phi_i(a) &= (S^\frac{1}{2}_i V T^{-1}_i)^\ast \pi(a) (S^\frac{1}{2}_i V T^{-1}_i) \\
&= (W^\ast_g W_g S^\frac{1}{2}_i V T^{-1}_i)^\ast  \pi(a)   (S^\frac{1}{2}_i V T^{-1}_i) \\
&= (S^\frac{1}{2}_i V T^{-1}_i)^\ast W^\ast_g W_g  \pi(a)   (S^\frac{1}{2}_i V T^{-1}_i) \\
&= (S^\frac{1}{2}_i V T^{-1}_i)^\ast W^\ast_g \pi(\tau_g (a)) W_g (S^\frac{1}{2}_i V T^{-1}_i) \\
&= (W_g S^\frac{1}{2}_i V T^{-1}_i)^\ast \pi(\tau_g (a)) (W_g S^\frac{1}{2}_i V T^{-1}_i) \\
&= (V Z_g T^{-1}_i)^\ast \pi(\tau_g (a)) (V Z_g  T^{-1}_i) \\
&= (Z_g T^{-1}_i)^\ast V^\ast \pi(\tau_g (a)) V (Z_g  T^{-1}_i) \\
&= (Z_g T^{-1}_i)^\ast \phi(\tau_g (a)) (Z_g  T^{-1}_i) \\
&= (Z_g T^{-1}_i)^\ast \phi(a) (Z_g  T^{-1}_i).
\end{align*}
As $\phi_i(1_\mathcal{A}) = \phi(1_\mathcal{A}) = \mathrm{Id}_\mathcal{H}$, the operator $Z_g  T^{-1}_i$ is an isometry, also, we know that $Z_g$ and $T_i$ are both invertible. Therefore, $Z_g  T^{-1}_i$ is unitary. As $i \in \{ 1, 2  \}$ was fixed arbitrarily, we get that $\phi$ is a $C^\ast$-extreme point of $\ucpapg$. This completes the proof.
\end{proof}

In the following theorem, we use the techniques presented by Zhou in \cite{Z} and give an equivalent condition for an element in $\ucpapg$ to be $C^\ast$-extreme.

\begin{theorem}  \label{thm; equivalent condition 2}
Let $\phi \in \ucpapg$ with the minimal Stinespring representation for $\phi$ be given by $\big (\pi, V, \mathcal{K} \big )$. Then the following statements are equivalent
\begin{enumerate}
\item $\phi$ is a $C^\ast$-extreme point of $\ucpapg$;
\item if $\psi \in \cpapg$ such that $\psi \leq \phi$ and $\psi(1_\mathcal{A})$ is invertible, then there exists an invertible operator $Z \in \mathcal{B}(\mathcal{H})$ such that $\psi(a) = Z^\ast \phi(a) Z$ for all $a \in \mathcal{A}$.
\end{enumerate}
\end{theorem}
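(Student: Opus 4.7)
The plan is to prove both implications directly, with the forward direction $(1)\Rightarrow(2)$ engineering a proper two-term $C^\ast$-convex decomposition of $\phi$ in which one piece is (a rescaling of) $\psi$, and the reverse direction $(2)\Rightarrow(1)$ reducing to two-term decompositions via Theorem \ref{thm; equivalent condition 3} and then applying (2) summand-wise.

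For $(1)\Rightarrow(2)$, I would start with $\psi\in\cpapg$ satisfying $\psi\le\phi$ and $\psi(1_\mathcal{A})$ invertible. The naive attempt $\phi=\psi+(\phi-\psi)$ fails because $(\phi-\psi)(1_\mathcal{A})=\mathrm{Id}_\mathcal{H}-\psi(1_\mathcal{A})$ need not be invertible. I would circumvent this by fixing any $\alpha\in(0,1)$ and setting $T_1:=(\alpha\psi(1_\mathcal{A}))^{1/2}$ and $T_2:=(\mathrm{Id}_\mathcal{H}-\alpha\psi(1_\mathcal{A}))^{1/2}$; since $\alpha\psi(1_\mathcal{A})\le\alpha\,\mathrm{Id}_\mathcal{H}$, one has $T_2^2\ge(1-\alpha)\mathrm{Id}_\mathcal{H}$, so both $T_1,T_2$ are invertible. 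Define $\phi_1(\cdot):=T_1^{-1}(\alpha\psi)(\cdot)T_1^{-1}$ and $\phi_2(\cdot):=T_2^{-1}(\phi-\alpha\psi)(\cdot)T_2^{-1}$; both are in $\ucpapg$ (they are UCP by construction, and $G$-invariance is inherited from $\psi$ and $\phi$). A direct check gives $T_1^\ast T_1+T_2^\ast T_2=\mathrm{Id}_\mathcal{H}$ and $\phi=T_1^\ast\phi_1T_1+T_2^\ast\phi_2T_2$, a proper $C^\ast$-convex combination. The $C^\ast$-extremality of $\phi$ yields a unitary $U\in\mathcal{B}(\mathcal{H})$ with $\phi_1(\cdot)=U^\ast\phi(\cdot)U$; substituting back, $Z:=\alpha^{-1/2}UT_1$ is invertible and satisfies $\psi(\cdot)=Z^\ast\phi(\cdot)Z$.

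For $(2)\Rightarrow(1)$, by Theorem \ref{thm; equivalent condition 3} it suffices to handle a proper two-term combination $\phi=T_1^\ast\phi_1T_1+T_2^\ast\phi_2T_2$ with $\phi_i\in\ucpapg$ and $T_i$ invertible. I would set $\psi_i(\cdot):=T_i^\ast\phi_i(\cdot)T_i$; each $\psi_i$ is in $\cpapg$, and $\psi_1=\phi-\psi_2\le\phi$ (similarly for $\psi_2$), while $\psi_i(1_\mathcal{A})=T_i^\ast T_i$ is invertible. Hypothesis (2) supplies invertible $Z_i\in\mathcal{B}(\mathcal{H})$ with $\psi_i=Z_i^\ast\phi Z_i$, hence $\phi_i(\cdot)=(Z_iT_i^{-1})^\ast\phi(\cdot)(Z_iT_i^{-1})$. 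Evaluating at $1_\mathcal{A}$ forces $(Z_iT_i^{-1})^\ast(Z_iT_i^{-1})=\mathrm{Id}_\mathcal{H}$, so $Z_iT_i^{-1}$ is an isometry; being also invertible, it is unitary. Thus $\phi_i\sim\phi$ and $\phi$ is $C^\ast$-extreme.

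The only real obstacle is the scaling trick in $(1)\Rightarrow(2)$: without replacing $\psi$ by $\alpha\psi$, the complement $\phi-\psi$ may have a non-invertible total, which would prevent us from exhibiting $\phi$ as a \emph{proper} $C^\ast$-convex combination. Everything else is bookkeeping: complete positivity and $G$-invariance transfer automatically through conjugations by invertible operators, so the Stinespring/Radon--Nikodym machinery of Section \ref{sec;Preliminaries} is not even required in this argument.
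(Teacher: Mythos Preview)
Your proposal is correct and follows the same overall strategy as the paper: the $\alpha$-scaling trick in $(1)\Rightarrow(2)$ to manufacture a \emph{proper} two-term $C^\ast$-convex decomposition, and the reduction to two terms via Theorem~\ref{thm; equivalent condition 3} for $(2)\Rightarrow(1)$.

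The one noteworthy difference is in the execution of $(1)\Rightarrow(2)$. The paper first invokes the invariant Radon--Nikodym correspondence (Theorem~\ref{lem;rnd}) to write $\psi(a)=V^\ast T\pi(a)V$, and after obtaining the unitary $U$ it passes through the uniqueness of the minimal Stinespring representation (Proposition~\ref{prop;umsd}) to extract the invertible $Z$. You bypass all of this: once $\phi_1=U^\ast\phi U$ and $T_1\phi_1T_1=\alpha\psi$ are in hand, direct substitution gives $\psi=Z^\ast\phi Z$ with $Z=\alpha^{-1/2}UT_1$. Your route is more elementary and, as you note, makes no use of the dilation data $(\pi,V,\mathcal{K})$ appearing in the statement; the paper's route has the (minor) advantage of exhibiting where the Stinespring picture sits in the argument, in parallel with Theorem~\ref{thm; equivalent condition 1}. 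For $(2)\Rightarrow(1)$ the two arguments coincide verbatim.
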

\begin{proof}
First we assume that $\phi$ is a $C^\ast$-extreme point of $\ucpapg$. Consider an element $\psi$ in $\cpapg$ such that $\psi \leq \phi$ and $\psi(1_\mathcal{A})$  invertible.  By following Lemma \ref{lem;rnd}, we get a unique $T \in  \big (\pi(\mathcal{A}) \cup U_\phi(G) \big )^\prime$ such that 
\begin{equation*}
\psi(a) = V^\ast T \pi(a) V = (T^{\frac{1}{2}} V)^\ast \pi(a) T^{\frac{1}{2}} V  \end{equation*}
for all $a \in \mathcal{A}$. Now let $\alpha \in (0, 1)$. Then by using the assumptions that $\psi \leq \phi$ also $\psi(1_\mathcal{A})$ is invertible and $\phi$ is unital, we get that 
\begin{equation*}
\alpha \psi(1_\mathcal{A}) = \alpha V^\ast T V \; \; \; \text{and} \; \; \; \mathrm{Id}_\mathcal{H} - \alpha V^\ast T V
\end{equation*}
are invertible positive contractions in $\mathcal{B}(\mathcal{H})$. Then the operators $T_1 := (\alpha V^\ast T V)^{\frac{1}{2}}$ and $T_2 := (\mathrm{Id}_\mathcal{H} - \alpha V^\ast T V)^{\frac{1}{2}}$ are also invertible and we obtain $T^\ast_1 T_1 + T^\ast_2 T_2 = \mathrm{Id}_\mathcal{H}$. Consider the following two maps:
\begin{equation*}
\phi_1(\cdot) := \alpha T^{-1^\ast}_1 \psi(\cdot) T^{-1}_1 \; \; \; \text{and} \; \; \; \phi_2(\cdot) := T^{-1^\ast}_2 (\phi - \alpha \psi)(\cdot) T^{-1}_2.
\end{equation*}
Observe that 
\begin{align*}
\phi_1(1_\mathcal{A}) = \alpha T^{-1^\ast}_1 \psi(1_\mathcal{A}) T^{-1}_1 = T^{-1^\ast}_1 \alpha \psi(1_\mathcal{A}) T^{-1}_1 
= T^{-1^\ast}_1 \alpha V^\ast T V T^{-1}_1
= T^{-1^\ast}_1 T_1 T_1  T^{-1}_1 = \mathrm{Id}_\mathcal{H},
\end{align*}
and similarly $\phi_2(1_\mathcal{A}) = \mathrm{Id}_\mathcal{H}$. Since $\psi$ and $\phi - \psi$ are in $\cpapg$, we get that $\phi_1, \phi_2 \in \ucpapg$. Moreover, we obtain 
\begin{equation*}
\phi(\cdot) = T^{\ast}_1 \phi_1(\cdot) T_1 + T^{\ast}_2 \phi_2(\cdot) T_2.
\end{equation*}
As $\phi$ is a $C^\ast$-extreme points there exists a unitary $U \in \mathcal{B}(\mathcal{H})$ such that $\phi(\cdot) = U^\ast \phi_1(\cdot) U$. Define $V_1 := \alpha^{\frac{1}{2}} T^{\frac{1}{2}} V T^{-1}_1 U \in \mathcal{B}(\mathcal{H}, \mathcal{K})$. Then by using $\psi(\cdot) = V^\ast T \pi(\cdot) V = (T^{\frac{1}{2}} V)^\ast \pi(\cdot) T^{\frac{1}{2}} V$, we get
\begin{equation*}
V^\ast_1 \pi(\cdot) V_1 = \alpha U^\ast T^{-1^\ast}_1 V^\ast T^{\frac{1}{2}^\ast} \pi(\cdot) T^{\frac{1}{2}} V T^{-1}_1 U = \alpha U^\ast T^{-1^\ast}_1 \psi(\cdot) T^{-1}_1 U = U^\ast \phi_1(\cdot) U = \phi(\cdot).
\end{equation*}
Also, $V^\ast_1 V_1 = \phi(1_\mathcal{A}) = \mathrm{Id}_\mathcal{H}$. Let 
\begin{equation*}
\mathcal{K}_1 := \overline{\text{span}} \big \{  \pi(a)V_1h \; \; : \; \; a \in \mathcal{A}, \; \; h \in \mathcal{H} \big \} \subseteq \mathcal{K}.
\end{equation*}
Then $\big (\pi\big|_{\mathcal{K}_1}, V_1, \mathcal{K}_1 \big )$ is also the minimal Stinespring representation for $\phi$. Thus by following Proposition \ref{prop;umsd}, we get a unitary map $U_1 : \mathcal{K}_1 \rightarrow \mathcal{K}$ such that 
\begin{equation*}
U_1V_1 = V \; \; \; \text{and} \; \; \; U_1^\ast \pi(\cdot) U_1 = \pi(\cdot)\big|_{\mathcal{K}_1}.
\end{equation*}
Now consider
\begin{equation*}
V = U_1V_1 = U_1 \alpha^{\frac{1}{2}} T^{\frac{1}{2}} V T^{-1}_1 U = U_1 T^{\frac{1}{2}} V Z^{-1}, \; \; \; \; \text{where} \; \; Z = \frac{1}{\alpha^{\frac{1}{2}}} U^\ast T_1.
\end{equation*}
Then for all $a \in \mathcal{A}$, we get (in the following computations, we use the fact that the domain of $U_1$ contains the range of $V_1$)
\begin{align*}
Z^\ast \phi(a) Z = Z^\ast V^\ast \pi(a) V Z &= Z^\ast V^\ast U_1 U^\ast_1 \pi(a)  V Z \\
&=  Z^\ast V^\ast U_1 \pi(a)\big|_{\mathcal{K}_1} U^\ast_1 V Z \\
&= (U^\ast_1 V Z)^\ast  \pi(a)\big|_{\mathcal{K}_1} U^\ast_1 V Z \\
&= (U^\ast_1 U_1 T^{\frac{1}{2}} V Z^{-1} Z)^\ast  \pi(a)\big|_{\mathcal{K}_1} U^\ast_1 U_1 T^{\frac{1}{2}} V Z^{-1} Z \\
&= (T^{\frac{1}{2}} V)^\ast  \pi(a)\big|_{\mathcal{K}_1} T^{\frac{1}{2}} V \\
&= (T^{\frac{1}{2}} V)^\ast  \pi(a) T^{\frac{1}{2}} V \\
&= V^\ast T^{\frac{1}{2}} \pi(a) T^{\frac{1}{2}} V \\
&= V^\ast T \pi(a) V = \psi(a).
\end{align*}
This proves that under the given assumption $\psi(\cdot) = Z^\ast \phi(\cdot) Z$ for some invertible operator $z \in \mathcal{B}(\mathcal{H})$.

To prove the other implication, we use Theorem \ref{thm; equivalent condition 3}. Let $\phi(\cdot) = \sum\limits^2_{i=1} T^\ast_i \phi_i(\cdot) T_i$, be a $C^\ast$-convex combination of elements of $\ucpapg$. Fix $i \in \{ 1, 2  \}$ and define $\psi(\cdot) := T^\ast_i \phi_i(\cdot) T_i$. Then $\psi(1_\mathcal{A}) = T^\ast_iT_i$ is an invertible operator. Also, $\psi \in \cpapg$ and $\psi \leq \phi$. Then there exists an invertible operator $Z \in \mathcal{B}(\mathcal{H})$ such that $\psi(a) = Z^\ast \phi(a) Z$ for all $a \in \mathcal{A}$. Now since $\phi$ is unital, observe
\begin{align*}
(Z T^{-1}_i)^\ast Z T^{-1}_i = (T^\ast_i)^{-1} Z^\ast Z T^{-1}_i &= (T^\ast_i)^{-1} Z^\ast \phi(1_\mathcal{A}) Z T^{-1}_i  \\
&= (T^\ast_i)^{-1} \psi(1_\mathcal{A}) T^{-1}_i = (T^\ast_i)^{-1} T^\ast_iT_i T^{-1}_i = \mathrm{Id}_\mathcal{H},
\end{align*}
As both $Z$ and $T_i$ are invertible, by using the above equation, we get that $U := Z T^{-1}_i$ is unitary and 
\begin{equation*}
U^\ast \phi(a) U = (T^\ast_i)^{-1} Z^\ast \phi(a)  Z T^{-1}_i = (T^\ast_i)^{-1} \psi(a)  T^{-1}_i = \phi_i(a), \; \; \; \text{for all} \; \; a \in \mathcal{A}.
\end{equation*}
Since $i \in  \{ 1, 2  \}$ was arbtrarily chosen, $\phi$ is a $C^\ast$-extreme point of $\ucpapg$.
\end{proof}

\section{The Krein--Milman theorem} \label{sec; KM Theorem}

In this section, we prove the Krein--Milman theorem for the $C^\ast$-convex set $\ucpapg$. We start by recalling the definition of the $C^\ast$-convex hull of an arbitrary subset of $\cpa$.

\begin{definition}
Let $S \subseteq \cpa$. Then the  $C^\ast$-convex hull of $S$ is given by
\begin{equation*}
C^\ast-con(S) = \left \{  \sum^n_{i =1} T^\ast_i \phi_i(\cdot) T_i \; \; : \; \; \phi_i \in S, \; T_i \in \mathcal{B}(\mathcal{H}), \; \sum^n_{i =1} T^\ast_i T_i = \mathrm{Id}_\mathcal{H}, \; \; n \geq 1 \right \}.
\end{equation*}
\end{definition}
\noindent
The $C^\ast-con(S)$ is the smallest $C^\ast$-convex set containing $S$.

Now we state and prove the Krein--Milman theorem for the $C^\ast$-convex set $\ucpapg$. We use various versions of the Krein--Milman theorem established in  \cite{BBK, BhKu, FM}.

\begin{theorem}
Let $G$ be a finite group, $\mathcal{A}$ be a unital $C^\ast$-algebra, and $\mathcal{H}$ be a separable Hilbert space. Suppose $\tau$ is an action of the finite group $G$ on $\mathcal{A}$ through (unital) $C^\ast$-automorphisms. If one of the following conditions is true:
\begin{enumerate}
\item $\mathcal{H}$ is finite dimensional;
\item $\mathcal{A}$ is separable or type I factor;
\item $\mathcal{A}$ is commutative,
\end{enumerate}
then we have
\begin{equation*}
\ucpapg = \overline{C^\ast-con} \big (C^\ast-ext \big (\ucpapg \big) \big ),
\end{equation*}
where the closure is taken with respect to BW-topology.
\end{theorem}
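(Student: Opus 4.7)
The plan is to reduce to the $C^\ast$-convex Krein--Milman theorems already established in \cite{FM, BBK, BhKu} for the ambient set $\ucpa$ in each of the three cases, and then transfer the conclusion to the invariant subset via the \emph{group averaging map}
\[
\mathbb{E}_\tau:\ucpa\longrightarrow\ucpapg,\qquad \mathbb{E}_\tau(\psi)(a):=\psi\!\left(\tfrac{1}{|G|}\sum_{g\in G}\tau_g(a)\right)
\]
from Example~\ref{ex;Ginv}(1). Three elementary properties of $\mathbb{E}_\tau$ will drive the argument: it is BW-continuous (since $a\mapsto\tfrac{1}{|G|}\sum_g\tau_g(a)$ is norm-continuous on $\mathcal{A}$); it is $C^\ast$-affine, i.e.\ $\mathbb{E}_\tau(\sum_i T_i^\ast\psi_i T_i)=\sum_i T_i^\ast\mathbb{E}_\tau(\psi_i)T_i$ for every $C^\ast$-convex combination; and it restricts to the identity on $\ucpapg$.

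Given $\phi\in\ucpapg$, I would first invoke the $C^\ast$-convex Krein--Milman theorem for the ambient set in the appropriate case, obtaining a net $\phi_\alpha=\sum_i T_{i,\alpha}^\ast\psi_{i,\alpha}T_{i,\alpha}$ of $C^\ast$-convex combinations with each $\psi_{i,\alpha}$ a $C^\ast$-extreme point of $\ucpa$ and $\phi_\alpha\to\phi$ in BW-topology. Applying $\mathbb{E}_\tau$ and using its three properties yields
\[
\phi=\mathbb{E}_\tau(\phi)=\text{BW-}\lim_\alpha\sum_i T_{i,\alpha}^\ast\,\mathbb{E}_\tau(\psi_{i,\alpha})\,T_{i,\alpha},
\]
with every $\mathbb{E}_\tau(\psi_{i,\alpha})\in\ucpapg$. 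The remaining task is to show that each such $\mathbb{E}_\tau(\psi_{i,\alpha})$ lies in $\overline{C^\ast\text{-}con}\big(C^\ast\text{-}ext(\ucpapg)\big)$. I would attack this by iteration: if $\chi:=\mathbb{E}_\tau(\psi_{i,\alpha})$ is not already $C^\ast$-extreme in $\ucpapg$, Theorem~\ref{thm; equivalent condition 1} produces a $\chi$-invertible positive $S\in\big(\pi_\chi(\mathcal{A})\cup U_\chi(G)\big)^\prime$ for which the partial-isometry condition fails, and the Radon--Nikodym correspondence of Theorem~\ref{lem;rnd} converts this failure into a proper $C^\ast$-decomposition $\chi=T_1^\ast\chi_1 T_1+T_2^\ast\chi_2 T_2$ with each $\chi_i\in\ucpapg$ having a strictly smaller relative commutant $\big(\pi_{\chi_i}(\mathcal{A})\cup U_{\chi_i}(G)\big)^\prime$. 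A Zorn-chain limit then yields components whose commutant leaves $V\mathcal{H}$ invariant and are therefore $C^\ast$-extreme by Proposition~\ref{prop; sufficient conditions}(3).

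The hard part is ensuring that this iterative decomposition actually terminates or converges \emph{inside} $\ucpapg$ to $C^\ast$-extreme points, and this is where the three hypotheses enter. In case~(1), finite-dimensionality of $\mathcal{H}$ makes $\big(\pi_\chi(\mathcal{A})\cup U_\chi(G)\big)^\prime$ a finite-dimensional $C^\ast$-algebra, so the iteration terminates after finitely many steps; alternatively one may bypass the iteration entirely by applying the classical linear Krein--Milman theorem to the compact convex set $\ucpapg$ and upgrading each linear extreme point to a $C^\ast$-extreme one via Theorem~\ref{thm; equivalent condition 2}, with Proposition~\ref{prop; C star extreme implies extreme} supplying the other inclusion. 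Case~(2) relies on the direct-integral decomposition of $\pi_\chi$ available when $\mathcal{A}$ is separable or type~I, made compatible with the finite-dimensional representation $U_\chi(G)$ of the finite group $G$, while case~(3) reduces through Gelfand duality to the classical ergodic decomposition of $G$-invariant probability measures along the lines of \cite[Chapter~4.3]{OB1}. In every case the BW-closure in the conclusion is obtained by collecting these iterative decompositions along the approximating net, yielding the desired equality.
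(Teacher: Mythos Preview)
Your averaging step is circular. After pushing the net through $\mathbb{E}_\tau$, the ``remaining task'' you identify --- showing that each $\chi=\mathbb{E}_\tau(\psi_{i,\alpha})$ lies in $\overline{C^\ast\text{-}con}\bigl(C^\ast\text{-}ext(\ucpapg)\bigr)$ --- is exactly the theorem you are trying to prove, applied to $\chi$ instead of $\phi$. The map $\mathbb{E}_\tau$ is $C^\ast$-affine and surjective, but it does \emph{not} carry $C^\ast$-extreme points of $\ucpa$ to $C^\ast$-extreme points of $\ucpapg$, so nothing has been gained: $\chi$ is an arbitrary element of $\ucpapg$ with no special structure. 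Your subsequent ``iterative decomposition'' is therefore an attempt to prove the theorem from scratch, and the sketch has real gaps: the claim that a proper decomposition yields pieces with \emph{strictly smaller} relative commutant $\bigl(\pi_{\chi_i}(\mathcal{A})\cup U_{\chi_i}(G)\bigr)^\prime$ is not justified (and is generally false without further hypotheses), the Zorn argument has no well-ordering to work with, and the alternative in case~(1) of ``upgrading'' linear extreme points to $C^\ast$-extreme ones via Theorem~\ref{thm; equivalent condition 2} is backwards --- that theorem characterizes $C^\ast$-extreme points but does not show linear-extreme $\Rightarrow$ $C^\ast$-extreme.

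The paper avoids all of this by working with the fixed-point subalgebra $\mathcal{A}^G=\{a\in\mathcal{A}:\tau_g(a)=a\ \text{for all}\ g\}$ rather than with $\ucpa$. The restriction map $E:\ucpapg\to\text{UCP}\bigl(\mathcal{A}^G,\mathcal{B}(\mathcal{H})\bigr)$, $E(\phi)=\phi|_{\mathcal{A}^G}$, is a BW-homeomorphic $C^\ast$-affine \emph{bijection} (its inverse is precisely your $\mathbb{E}_\tau$ restricted to maps on $\mathcal{A}^G$), and therefore $E^{-1}$ sends $C^\ast$-extreme points to $C^\ast$-extreme points. Since $\mathcal{A}^G$ inherits each of the hypotheses (1)--(3), the known Krein--Milman theorems of \cite{FM,BBK,BhKu} apply directly to $\text{UCP}\bigl(\mathcal{A}^G,\mathcal{B}(\mathcal{H})\bigr)$ and pull back through $E^{-1}$. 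The point you missed is that the right ambient object is not $\ucpa$ but $\text{UCP}\bigl(\mathcal{A}^G,\mathcal{B}(\mathcal{H})\bigr)$: here the correspondence is a genuine isomorphism of $C^\ast$-convex sets, not merely a surjective $C^\ast$-affine retraction.
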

\begin{proof}
Let $\{ \phi_i \}^n_{i =1}$ be a collection of elements in $C^\ast-ext \big (\ucpapg \big)$. Let $i \in \{1, 2, \cdots, n \}$ with $T_i \in \mathcal{B}(\mathcal{H})$ be such that $\sum\limits^n_{i =1} T^\ast_i T_i = \mathrm{Id}_\mathcal{H}$, where $n \geq 1$. Then for all $g \in G$ and $a \in \mathcal{A}$, we get
\begin{equation*}
\sum^n_{i =1} T^\ast_i \phi_i(\tau_g(a)) T_i =  \sum^n_{i =1} T^\ast_i \phi_i(a) T_i \; \; \; \; \text{and} \; \; \; \sum^n_{i =1} T^\ast_i \phi_i(1_\mathcal{A}) T_i = \sum^n_{i =1} T^\ast_i T_i = \mathrm{Id}_\mathcal{H}.
\end{equation*}
Thus $\sum\limits^n_{i =1} T^\ast_i \phi_i(\cdot) T_i \in \ucpapg$. Since the set $\ucpapg$ is a closed subset of $\ucpa$ with respect to BW-topology (see \cite{BK3}), it follows that  
\begin{equation*}
\overline{C^\ast-con} \big (C^\ast-ext \big (\ucpapg \big) \big ) \subseteq \ucpapg.
\end{equation*}

Now to show the other inclusion consider $\mathcal{A}^G : = \{ a \in \mathcal{A} \; \; : \; \; \tau_g(a) = a, \; \; \text{for all} \; \;  g \in G  \}$. Clearly, $1_\mathcal{A} \in \mathcal{A}^G$. Also, for any fixed $g_o \in G$ and $a \in \mathcal{A}$, one may observe that 
\begin{equation*}
\tau_{g_0} \left ( \frac{\sum\limits_{g \in G} \tau_g(a)}{|G|} \right ) = \frac{\sum\limits_{g_0 g = g^\prime \in G} \tau_{g^\prime}(a)}{|G|} = \frac{\sum\limits_{g \in G} \tau_g(a)}{|G|}.
\end{equation*}
This shows that for any $a \in \mathcal{A}$, we have $\frac{\sum\limits_{g \in G} \tau_g(a)}{|G|} \in \mathcal{A}^G$. If $a, b \in \mathcal{A}^G$, $\lambda \in \mathbb{C}$, then for all $g \in G$, we obatin
\begin{align*}
\tau_g(a + \lambda b) &= \tau_g(a) + \lambda \tau_g(b) = a + \lambda b; \\
\tau_g(ab) &= \tau_g(a) \tau_g(b) = a b; \\
\tau_g(a^\ast) &= \tau_g(a)^\ast = a^\ast.
\end{align*}
Further, as $\tau_g$ is isometric, we find that $\mathcal{A}^G$ is a $C^\ast$-subalgebra of $\mathcal{A}$.

Now we consider the set $\text{UCP} \big(\mathcal{A}^G, \mathcal{B}  (\mathcal{H}) \big)$ of all unital completely positive maps from $\mathcal{A}^G$ to $\mathcal{B}  (\mathcal{H})$, and define a map 
\begin{equation*}
E : \ucpapg \rightarrow \text{UCP} \big(\mathcal{A}^G, \mathcal{B}  (\mathcal{H}) \big)
\end{equation*}
by $E(\phi) := \phi \big |_{\mathcal{A}^G}$. First we prove that $E$ is one-one. Suppose $E(\phi_1) = E(\phi_2)$. Then for any $a \in \mathcal{A}$, 
\begin{equation*}
\phi_1(a) = \phi_1 \left ( \frac{\sum\limits_{g \in G} \tau_g(a)}{|G|} \right ) = \phi_2 \left ( \frac{\sum\limits_{g \in G} \tau_g(a)}{|G|} \right ) = \phi_2(a).
\end{equation*}
Since $a \in \mathcal{A}$ was arbitrarily chosen we get that $\phi_1 = \phi_2$. Now we prove that $E$ is onto. For this consider an element $\phi \in \text{UCP} \big(\mathcal{A}^G, \mathcal{B}  (\mathcal{H}) \big)$. Then define $\tilde{\phi} : \mathcal{A} \rightarrow \mathcal{B}  (\mathcal{H})$ by 
\begin{equation*}
\tilde{\phi}(a) := \phi \left (  \frac{\sum\limits_{g \in G} \tau_g(a)}{|G|} \right ), \; \; \; \; \text{for all} \; \; a \in \mathcal{A}.
\end{equation*}
Then clearly $\tilde{\phi}(a) = \phi(a)$ for all $a \in \mathcal{A}^G$. This implies that  $E(\tilde{\phi}) = \phi$. Thus $E$ is onto. Also, we get 
\begin{equation*}
 \left ( \phi = \sum^n_{i =1} T^\ast_i \phi_i T_i \right ) \Big |_{\mathcal{A}^G} = \sum^n_{i =1} T^\ast_i \phi_i \big |_{\mathcal{A}^G}  T_i, \; \; \; \text{that is,} \; \; \; E \left (\phi = \sum^n_{i =1} T^\ast_i \phi_i T_i \right )  = \sum^n_{i =1} T^\ast_i E( \phi_i) T_i,
\end{equation*}
where each $\phi_i \in \ucpapg$ and $T_i \in \mathcal{B}(\mathcal{H})$  with $\sum\limits^n_{i =1} T^\ast_i T_i = \mathrm{Id}_\mathcal{H}$. This shows that the map $E$ preserves the $C^\ast$-convex combination. Moreover, the map $E$ has the property that if $\phi_\alpha \rightarrow \phi$ in the BW-topology, then $E(\phi_\alpha) \rightarrow E(\phi)$ in BW-topology as well. Since the domain of $E$ is compact, it follows that the inverse map $E^{-1}$ is also continuous.

Now suppose $\psi \in C^\ast-ext \big (\text{UCP} \big(\mathcal{A}^G, \mathcal{B}  (\mathcal{H}) \big) \big)$. Then we show that $E^{-1}(\psi)$ belongs to $C^\ast-ext \big (\ucpapg \big)$. Suppose $E^{-1}(\psi) = \sum\limits^n_{i =1} T^\ast_i \phi_i T_i$, where $\phi_i \in \ucpapg$, \; $T_i \in \mathcal{B}(\mathcal{H})$, and $T^{-1}_i$ exists for all $i \in \{ 1, 2, \cdots, n \}$ with $\sum\limits^n_{i =1} T^\ast_i T_i = \mathrm{Id}_\mathcal{H}$. Then we have 
\begin{equation*}
\psi = E \left ( E^{-1}(\psi) \right ) = E \left ( \sum\limits^n_{i =1} T^\ast_i \phi_i T_i \right ) = \sum\limits^n_{i =1} T^\ast_i E(\phi_i) T_i.
\end{equation*}
Since $\psi \in C^\ast-ext \big (\text{UCP} \big(\mathcal{A}^G, \mathcal{B}  (\mathcal{H}) \big) \big)$, there exists a family of unitary operators $\big \{ U_i \in  \mathcal{B}(\mathcal{H}) \big \}^n_{i =1}$ such that
\begin{equation*}
E(\phi_i)(\cdot) = U^\ast_i \psi(\cdot) U_i, \; \; \; \text{for all} \; \; 1 \leq i \leq n.
\end{equation*}
This implies
\begin{equation*}
\phi_i(\cdot) = E^{-1}(E(\phi_i))(\cdot) = E^{-1}(U^\ast_i \psi(\cdot) U_i) = U^\ast_i E^{-1}(\psi)(\cdot) U_i, \; \; \; \text{for all} \; \; 1 \leq i \leq n.
\end{equation*}
This shows that $E^{-1}(\psi)$ belongs to $C^\ast-ext \big (\ucpapg \big)$.

We now complete the proof of the result. Let $\phi \in \ucpapg$, and correspondingly consider $E(\phi) \in \text{UCP} \big(\mathcal{A}^G, \mathcal{B}  (\mathcal{H}) \big)$. If we assume that the $C^\ast$-algebra $\mathcal{A}$ and the Hilbert space $\mathcal{H}$ satisfy one of the conditions from (1), (2) or (3) (given in the hypothesis), then the $C^\ast$-algebra $\mathcal{A}^G$ also satisfies the same conditions. By following the results presented in \cite{BBK, BhKu, FM}, we know that there exists a net $\{ \psi_\alpha \}_{\alpha} \subseteq C^\ast-con \big (C^\ast-ext \big (\text{UCP} \big(\mathcal{A}^G, \mathcal{B}  (\mathcal{H}) \big) \big) \big )$ such that 
\begin{equation} \label{eqn; psi alpha goes to e of phi}
\psi_\alpha \rightarrow E(\phi), \; \; \; \; \text{in BW-topology}.
\end{equation}
Write $\psi_\alpha = \sum\limits^{n_\alpha}_{j =1} T^\ast_{\alpha, j} \psi_{\alpha, j} T_{\alpha, j}$, where $\psi_{\alpha, j} \in C^\ast-ext \big (\text{UCP} \big(\mathcal{A}^G, \mathcal{B}  (\mathcal{H}) \big) \big)$, \;  $T_{\alpha, j} \in \mathcal{B}(\mathcal{H})$ and $n_\alpha \in \mathbb{N}$. Then for each $\alpha$ and $j \in \{1, 2, \cdots, n_\alpha \}$, we get  
\begin{equation*}
E^{-1}(\psi_{\alpha, j}) \in C^\ast-ext \big (\ucpapg \big ) \big).
\end{equation*}
Consequently, we get a net $\big \{ E^{-1}(\psi_\alpha) \big \}_{\alpha} \subseteq C^\ast-con \big (C^\ast-ext \big (\ucpapg \big) \big)$. By following Equation \eqref{eqn; psi alpha goes to e of phi}, and knowing the fact that $E^{-1}$ is contiunous with respect to BW-topology, we get that
\begin{equation*}
E^{-1}(\psi_\alpha) \rightarrow \phi \; \; \; \; \text{in BW-topology}.
\end{equation*}
Since $\phi \in \ucpapg$ was arbitrarily chosen, we get that 
\begin{equation*}
\ucpapg \subseteq \overline{C^\ast-con} \big (C^\ast-ext \big (\ucpapg \big) \big ).
\end{equation*}
This proves the result.
\end{proof}


\subsection*{Acknowledgements}
The author sincerely thanks the Indian Institute of Science Education and Research (IISER) Mohali, India for the financial support received through an Institute Postdoctoral Fellowship. 

\subsection*{Availability of data and material} Not applicable.


\bibliographystyle{plain}

\end{document}